\newcommand{\R}{{\mathbb{R}}}
\newcommand{\N}{{\mathbb{N}}}
\newcommand{\cH}{{\mathcal{H}}}
\newtheorem{theorem}{Theorem}
\newtheorem{lemma}{Lemma}
\newtheorem{definition}{Definition}
\newtheorem{corollary}{Corollary}
\title{LARGE TIME BEHAVIOR OF A COMPLEX NETWORK OF REACTION-DIFFUSION SYSTEMS OF FITZHUGH-NAGUMO TYPE}
\author{ B. Ambrosio, M.A. Aziz-Alaoui, V.L.E. Phan\footnote{Normandie Univ, France; ULH, LMAH, F-76600 Le Havre; FR CNRS 3335, ISCN, 25 rue
Philippe Lebon 76600 Le Havre, France. Mail:
benjamin.ambrosio@univ-lehavre.fr, aziz.alaoui@univ-lehavre.fr, pvlem6a2@gmail.com ).}}
\begin{document}
\maketitle

\begin{abstract}
We focus on the long time behavior of complex networks of reaction-diffusion (RD) systems. We prove the existence of the global attractor and a $L^{\infty}$-bound for a network of $n$ RD systems with $d$ variables each. This allows us to prove  the identical synchronization for general class of networks and establish the existence of a coupling strength threshold value that ensures such a synchronization. Then, we apply these results to some particular networks with different structures (i.e. different topologies) and perform numerical simulations. We found out theoretical and numerical heuristic laws for the minimal coupling strength  needed for synchronization   relatively to the number of nodes and the network topology, and discuss the link between  spatial dimension and synchronization.
\end{abstract}

Reaction-Diffusion systems, complex networks, attractor,  synchronization.

\pagestyle{myheadings}
\thispagestyle{plain}
\markboth{B. Ambrosio, M.A. Aziz-Alaoui, V.L.E Phan}{Complex network of reaction-diffusion equations}

\section{Introduction}

Networks of dynamical systems appear naturally in the modeling of  numerous applications. The simplest mode for the coordinated motion between  dynamical systems is their identical synchronization when all nodes of the network acquire identical dynamical behavior. Such cooperative behavior has been observed in natural or artificial systems such as neural networks, chemical and biological systems, computer clocks, social networks... In this paper, we will focus on RD systems networks that can be seen as neural networks. Besides, a classical question in dynamical systems is the existence of the attractor: basically, a set that attracts the trajectories for large time.

\textbf{Contributions}. There are three main contributions in the present paper.
First, we prove of the existence of the network attractor, and therefore, within this attractor, we analyze the synchronization behavior. Finally, we perform numerical simulations with different kind
of initial conditions and study numerically the spatial effects on synchronization and pattern formation.

\textbf{Mathematical framework and preliminaries.}
First of all, we introduce the mathematical framework we will use throughout this paper. Mathematically speaking, the network is represented by a graph,
the nodes of which are a d-dimensional RD system and the edges correspond to the coupling functions between these subsystems.
The general system reads as:
\begin{equation}\label{eq:Reseau}
{U_{it}} = \tilde{Q}\Delta {U_i}+\tilde{F}({U_i})  + \tilde{H}_i(U_1,...,U_n), i\in \{1,...,n\}.
\end{equation}
In this equation, each variable $U_i$ represents a function from $\Omega \times \R^+$ into  $\R^d$,  $\Omega$ is a bounded domain of $\R^N$ and $\tilde{F}: \R^d \rightarrow \R^d$
is the nonlinear reaction term.
For all $i\in \{1,...,n\}$, $\tilde{H}_i:\R^{nd} \rightarrow \R^d$ is the coupling function between nodes whereas $\tilde{Q}$ is a diagonal matrix of $\R^{d\times d}$ with positive coefficients.
If we add boundary conditions to \eqref{eq:Reseau}, we obtain a general reaction-diffusion system.
We will not go into details concerning the existence of the semi-group of \eqref{eq:Reseau}. We refer  to \cite{Ladyzens,JLL,Robinson, Temam} or
\cite{Friedman2, Henry, Mora, Rothe,Smoller}, for classical results on the existence of  semi-group in $L^p(\Omega)$ or in $C^{k,\alpha}(\Omega)$ spaces.
Our first theoretical result is the proof  of the existence of the global attractor for a particular class of networks of type  \eqref{eq:Reseau}
  that generalizes the FitzHugh-Nagumo (FHN) equations. Recall that (FHN) equations are a simplification in two variables of the HodgKin-Huxley model of four equations for the action propagation  in nerve,
  \cite{Fitz,HH,N}. A good qualitative analysis of the (FHN) reaction-diffusion system is given in \cite{Smoller2}, while in \cite{Ambrosio1} we gave a first analysis of a particular network of (FHN)
  reaction-diffusion systems. Here we extend some results of   \cite{Ambrosio1}, and also of \cite{Marion} where a system of two scalar equations,
  in which the diffusive term appears only in the first equation but not in the second one, was considered. We present some results for a network of $n$ partially diffusive systems with $d$ equations.
Indeed, we suppose that we can split the  system \eqref{eq:Reseau} into two subsystems, diffusive and non-diffusive, with $s$  and $d-s$ equations.
Therefore, we set  for all $i \in \{1,...,n\}$, $U_i=(u_i,v_i)$, and write \eqref{eq:Reseau}  in the following way:
\begin{equation}
\hspace{-0.5cm}
\label{eq:equv}
\left\{ \begin{array}{rcl}
  u_{it} &=& F({u_i,v_i}) + Q\Delta {u_i} + {H_i}(u_1,...,u_n), \, \,\mbox{ on } \Omega\times ]0,+\infty[, \qquad i\in \{1,...,n\}\\
 v_{it} &=&  - \sigma (x)v_i + \Phi (x,u_i) \mbox{ on } \Omega\times ]0,+\infty[,
 \end{array} \right.
\end{equation}
with Neumann Boundary conditions on $\partial \Omega$,  and
 where $u_i$ take values in $\R^s$, $1\leq s <d$  whereas $v_i$ take values in $\R^{d-s}$,  $Q$ is a diagonal matrix in $\R^{s\times s}$
 with coefficients $q^j, j \in \{1,...,s\}$, and  $H_i$ take values in $\R^s$.
  We use the classical notation $u_t$ for $\frac{\partial u}{\partial t}$.
 This means that diffusion and  coupling terms appear only in the $s$ first variables of each subsystem of the network.
 Finally, $\sigma(x)$ is a matrix in $\R^{(d-s)\times (d-s)}$ that verifies: $\sum_{j=1}^{d-s}\sum_{l=1}^{d-s}\sigma_{jl}(x) v_lv_j^{k} >\sigma \sum_{j=1}^{d-s}v_j^{k+1}$, for all $k \in \N$, $k$ odd, and some positive constant $\sigma$,
 and with bounded derivatives.
 The application $\Phi$ takes values in $\R^{(d-s)}$. Under some conditions on functions the system \eqref{eq:equv} generates a semi-group on $\mathcal{H}=(L^2(\Omega))^{nd}$,
 see \cite{Marion, Robinson,Temam}.
Before going into details of the analysis of system \eqref{eq:equv}, we present some key features for systems with one variable and two variables used to prove the existence of the global attractor.
These  techniques will be generalized to system \eqref{eq:equv} in section \ref{section:exofGA}. Let us start with the following equation:
\begin{equation}
\label{eq:1d}
u_t=\Delta u -u^3+u^2+u,
\end{equation}
considered in a bounded domain $\Omega$ with Neumann boundary conditions.
Multiplying \eqref{eq:1d} by $u$ gives,
\begin{align}
\frac{d}{dt}\int_\Omega u^2+2\int_\Omega|\nabla u|^2=&  -\int_\Omega (u^4-u^3-u^2) \label{eq:1dL2Bound-a}\\
\leq & -\delta \int_\Omega u^2 +K,\label{eq:1dL2Bound-b}
\end{align}
for some constants $\delta$ and $K$.
By Gronwall Lemma, there exists a constant $K'$ such that,
\begin{equation*}
\int_\Omega u^2\leq K',
\end{equation*}
 for all initial conditions in $L^2(\Omega)$ and for $t$ large enough.
Now, integrating \eqref{eq:1dL2Bound-b} between $t$ and $t+r$ for a given constant $r$ gives,
 \begin{equation}
 \label{eq:1dL2Bound-c}
\int_t^{t+r}\int_\Omega |\nabla u|^2\leq K \mbox{ for another } K.
\end{equation}

Multiplying \eqref{eq:1d} by $u^{2k-1}$, by analog computations, we find that there exists a constant $K''$ such that
\begin{equation}
 \label{eq:1dLkBound}
\int_\Omega u^{2k}\leq K'',
\end{equation}
 for all initial conditions in $L^2(\Omega)$ and for $t$ large enough.\\
Also, multiplying \eqref{eq:1d} by $-\Delta u$ gives,
\begin{align*}
\frac{d}{dt}\int_\Omega |\nabla u|^2=& -2\int_\Omega(\Delta u)^2 +2\int_\Omega (u^4-u^3-u^2)\Delta u\\
\leq & -\int_\Omega(\Delta u)^2 +\frac{3}{2}\int_\Omega (u^8+u^6+u^4) \mbox{ by using Young inequality}.
\end{align*}
Therefore, thanks to \eqref{eq:1dL2Bound-c} and  \eqref{eq:1dLkBound}, we deduce, by using uniform Gronwall Lemma (see appendix),  that
\begin{equation*}
\int_\Omega |\nabla u|^2<K,
\end{equation*}
for a constant $K$ and for all initial conditions in $L^2(\Omega)$ for $t$ large enough.\\
This gives the compacity of trajectories of \eqref{eq:1d}  thanks to the compact injection of $H^1$ in $L^2$.
Now, we consider the  system with two variables,
\begin{equation}
\label{eq:2d}
\left\{
\begin{array}{rcl}
 u_t&=&\Delta u -u^3+u^2+u +v\\
 v_t&=&-\delta v +u
\end{array}
\right.
\end{equation}
Multiplying the first equation of \eqref{eq:2d} by $u$ and the second by $v$, integrating, using Green formula, Young inequality, and Gronwall lemma leads to:
\begin{equation*}
\int_\Omega(u^2+v^2)<K,
\end{equation*}
for a constant $K$, for all initial conditions in $L^2(\Omega) \times L^2(\Omega)$, and time large enough.
Then, we will show the same result in $L^{2k}(\Omega)\times L^{2k}(\Omega)$ for all $k\in \N^*$. As we saw above, this result is true for $k=1$.
We multiply first equation of \eqref{eq:2d} by $u^{2k-1}$ and the second by $v^{2k-1}$, sum the two equations and integrate. We obtain:
\begin{equation*}
\frac{d}{dt}\int_\Omega(u^{2k}+v^{2k})=-\int_\Omega u^{2k-2}|\nabla u|^2-\int_\Omega (u^{2k+2}+u^{2k+1}+u^{2k})+\int_\Omega vu^{2k-1}+\int_\Omega uv^{2k-1}-\delta\int_\Omega v^{2k}.
\end{equation*}
Using Young inequality $ ab\leq \frac{c^pa^p}{p}+\frac{b^q}{c^qq}$, provides:
\begin{equation*}
\frac{d}{dt}\int_\Omega(u^{2k}+v^{2k})\leq-\int_\Omega (u^{2k+2}+u^{2k+1}+u^{2k})+\gamma_1\int_\Omega v^{\frac{2k+2}{3}}+ \gamma_2\int_\Omega u^{2k+2}+\gamma_3\int_\Omega u^{2k}+\gamma_4\int_\Omega v^{2k}-\delta\int_\Omega v^{2k},
\end{equation*}
with $\gamma_2<1$ and $\gamma_4<\delta$. Then, since $\frac{2k+2}{3}<2k$, there exists constants $\gamma$ and $K$ such that:
\begin{equation*}
\frac{d}{dt}\int_\Omega(u^{2k}+v^{2k})\leq-\int_\Omega (u^{2k}+v^{2k})+K.
\end{equation*}
It follows that there exists a constant $K$ (depending on $k$) such that:
\begin{equation*}
\int_\Omega(u^{2k}+v^{2k})<K,
\end{equation*}
 for all initial conditions in $L^2(\Omega) \times L^2(\Omega)$, and time large enough.
 By analog computations as for \eqref{eq:1d}, we obtain:
 \begin{equation*}
\int_\Omega |\nabla u|^2<K.
\end{equation*}
It  remains to consider $\int_\Omega |\nabla v|^2$. By multiplying gradient of the second equation of \eqref{eq:2d} by $\nabla v$, we obtain:
\begin{equation*}
\label{eq:vd}
 \frac{d}{dt}\int_\Omega |\nabla v|^2=-2\delta |\nabla v|^2 +2\nabla u.\nabla v,
\end{equation*}
it follows that,
\begin{equation*}
 \frac{d}{dt}\int_\Omega |\nabla v|^2\leq-\delta |\nabla v|^2 +K,
\end{equation*}
which gives,
 \begin{equation*}
\int_\Omega |\nabla v|^2<K,
\end{equation*}
for $t$ large enough. We have bounds in $L^q$ for all $q \in [1,+\infty[$. We can obtain bounds in $L^{\infty}(\Omega)$ thanks to a result in \cite{Rothe}.\\

Note that we can apply the same techniques for the generalized system:
\begin{equation}
\label{eq:2dgen}
\left\{
\begin{array}{rcl}
 u_t&=&\Delta u+f(u,v)\\
 v_t&=&-\delta v +g(x,u)
\end{array}
\right.
\end{equation}
if the following conditions hold
\begin{equation}
 \label{ineq:Condf1}
uf(u,v)\leq -\delta_1 u^p+\delta_2 uv+\delta_3
\end{equation}
and,
\begin{equation}
 \label{ineq:Condf2}
 |f(u,v)|\leq \delta_1 |u|^{p-1}+\delta_2|v|+\delta_3
\end{equation}
\begin{equation}
 \label{ineq:Condg}
 |\frac{\partial g}{\partial x}(x,u)|\leq c|u| \mbox{ and }  |\frac{\partial g}{\partial u}(x,u)|\leq K,
\end{equation}
where $p>2$, $\delta_i>0, i \in \{1,...,3\}$.

\textbf{Paper Organization}
After the present introduction, the theoretical results are presented in section $2$ and $3$. In section $2$, we prove, under some assumptions, the existence of the global attractor for the network of FHN-type \eqref{eq:equv} in $(L^2(\Omega)^d)^n,$ for any network topology. This allows us to show, in section $3$,  theoretical results on the synchronization onset of system \eqref{eq:equv}  with linear coupling functions. Then, we apply  these results  to complete and ring networks.  In section $4$, we  present the numerical simulations for fully connected and  unidirectionally coupled ring network. Each node, in the graph, is represented by a system of FitzHugh-Nagumo reaction-diffusion equations. This gives an insight on the relation between the number of neurons and the minimal coupling strength needed to reach the synchronization, with a particular attention on the effects of spatial dimension. For fully connected networks, our numerical simulations show that the minimal strength value for synchronization follows a ``$\frac{1}{n}$" law,  independently of the patterns induced by initial conditions. In unidirectional coupled ring network, the minimal strength value for synchronization follows a ``$n^2$'' law. Our conclusion is left to the last section.
\section{Existence of the global attractor}
\label{section:exofGA}
Now we prove the existence of the global attractor for the  dynamical system \eqref{eq:equv} in $\cH=(L^2(\Omega)^{d})^{n}$.
The global attractor is a compact invariant set for the flow that attracts all trajectories (see for example \cite{Ambrosio4, Marion, Robinson, Temam}). The existence of the global attractor is essential since it is a set  where the solutions asymptotically evolve.  In particular, all the patterns that we will see later  in our numerical simulations, belong, for enough  large time, to the global attractor. Also, our proof of synchronization given in section \ref{SecSynchro} uses $L^\infty$-bounds that we  prove in this section. Now, we specify some assumptions that we will assume throughout the article.
First, we assume that for all $i\in \{1,...,n\}$, and
for all $j\in \{1,...,s\}$,
 \begin{equation}\label{Ineq:F}
 u_i^jF^j(u_i,v_i) \leq -\delta_1| u_i^j|^p  + \delta_2|u_i^j|\sum_{k=1}^s|u_i^k |^{p_1}
 +\delta_3 |u_i^j|\sum_{k = 1}^{d-s} | v_i^k |   + \delta_4,
  \end{equation}
 with $p>2$,  $\delta_1, \delta_2, \delta_3>0$,  $0\leq p_1<p-1$, and,
\begin{equation}
\label{Ineq:F2}
|F^j(u_i,v_i)| \leq \delta_1| u_i^j|^{p-1}  + \delta_2\sum_{k=1}^s|u_i^k |^{p_1} + \delta_3\sum_{k = 1}^{d-s} | v_i^k |   + \delta_4.
\end{equation}
Condition \eqref{Ineq:F} generalizes \eqref{ineq:Condf1}. It indicates a decrease of order  $p$ at infinity and permit to obtain bounds in $L^q$ spaces. Condition \eqref{Ineq:F2} generalizes \eqref{ineq:Condf2} and allows us to apply Young inequalities in order to obtain bounds in $H^1$.
In our case, a typical example for which \eqref{Ineq:F}-\eqref{Ineq:F2} hold, is given by a function $F$ where
the  component $j$ reads as:
\begin{equation*}
\begin{array}{rcl}
F^j(u_i^1,...,u_i^s,v_i^1,...,v_i^{d-s})=&\displaystyle -a_{p-1}(u_i^j)^{p-1}+\sum_{k=0}^{p-2}\sum_{\alpha_{k1}+...+\alpha_{ks}=k}a_{k_1...k_s}\prod_{l=1}^s(u_i^l)^{\alpha_{kl}}\\
&\displaystyle+\sum_{l=1}^{d-s}b_kv_i^l,
\end{array}
\end{equation*}
with $a_{p-1}>0$, $a_{k_1...k_s}$, $b_k \in \R$ and $p$ even.
This simply means that, relatively to $u_i$, $F^j$ is polynomial of several variables, with the dominant term given by $(u_i^j)^{p-1}$, with negative coefficient,  and $p$ even.  The other terms have a degree lower than $p-1$.  Whereas $F^j$  is a linear function of $v_i$.

Moreover, in order to maintain the  effect of the decrease condition \eqref{Ineq:F},  we suppose that the coupling functions have a polynomial increase lower than $p-1$. This reads as:
\begin{equation}\label{Ineq:Hi}
| {H_i^j(u_1,...,u_n)} | \leq {\delta _4}(1+\sum_{k = 1}^n | u_k^j|^{p_1}),\,\,\,\,0<{p_1} < p - 1.
\end{equation}

Finally, we suppose that for all  $j \in \{1,...,d-s\}$,
\begin{equation}\label{ineq:Phix}
\left|  \frac{\partial \Phi^j }{\partial x_k}(x,u_i) \right| \leq \delta _5(1+\sum_{j=1}^s | u_i^j |),\,\,\,k\in \{1,...,N\},
\end{equation}
and,
\begin{equation}\label{ineq:Phiu}
\left| \frac{\partial \Phi^j }{\partial u^k_i}(x,u_i) \right| \leq {\delta _5}.
\end{equation}
Conditions \eqref{ineq:Phix} and \eqref{ineq:Phiu} generalize condition \eqref{ineq:Condg}.
They are not very restrictive and include functions $\Phi$ with spatial heterogeneity, that allow rich behavior, bifurcations and pattern formation (see \cite{Ambrosio0, Ambrosio4}).
We deduce from \eqref{ineq:Phix} and \eqref{ineq:Phiu} that for all $j \in \{1,...,d-s\}$,
\begin{equation}\label{ineq:Phi}
\left| \Phi^j(x,u_i) \right| \leq \delta_6(1+\sum_{l=1}^s\left| u_i^l\right|).
\end{equation}
Let us remark that all these assumptions appear naturally in the proof of the existence of the attractor of system \eqref{eq:equv} . They appear also in \cite{Marion, Robinson}.

 The following theorem gives the existence of the global attractor.

\begin{theorem}
\label{ThPrincipal}
Under assumptions \eqref{Ineq:F}-\eqref{ineq:Phiu}, the semi-group associated with \eqref{eq:equv} possesses a connected global attractor $\mathcal{A}$ in $\cH=(L^2(\Omega))^{nd}$. Furthermore, $\mathcal{A}$ is bounded in $(L^\infty(\Omega))^{nd}$.
\end{theorem}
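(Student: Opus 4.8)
The plan is to follow the standard scheme for proving existence of a global attractor for a dissipative parabolic system: establish that the semigroup possesses a bounded absorbing set in $\cH$, then upgrade the bound to higher regularity ($H^1$ and $L^\infty$) to get asymptotic compactness, and finally invoke the classical abstract theorem (e.g.\ from \cite{Temam,Robinson}). Connectedness of $\mathcal{A}$ will follow because $\cH$ is connected and the absorbing set can be taken to be a ball (hence connected), so its $\omega$-limit set is connected.

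First I would derive the $L^2$-absorbing set. Multiply the $u_i$-equations in \eqref{eq:equv} by $u_i^j$ and sum over $i,j$; multiply the $v_i$-equations by $v_i^j$ and sum; integrate over $\Omega$ and use Green's formula so the diffusion term gives $-q^j\int_\Omega|\nabla u_i^j|^2 \le 0$. The reaction term is controlled by \eqref{Ineq:F}: the $-\delta_1|u_i^j|^p$ term dominates (using $p>2$ and $p_1<p-1$, absorbing the $\delta_2$ cross-terms via Young's inequality), the coupling term is absorbed using \eqref{Ineq:Hi} (again $p_1<p-1$), the $\delta_3|u_i^j|\sum|v_i^k|$ term is split by Young between a small multiple of $|u_i^j|^p$ and a multiple of $|v_i^k|^2$, and the $v$-equation contributes $-\sigma\int_\Omega|v_i^j|^2$ from the hypothesis on $\sigma(x)$ with $k=1$, plus $\int_\Omega \Phi^j(x,u_i)v_i^j$ controlled via \eqref{ineq:Phi} and Young. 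This yields $\frac{d}{dt}\|(\mathbf{u},\mathbf{v})\|_{L^2}^2 \le -\alpha\|(\mathbf{u},\mathbf{v})\|_{L^2}^2 + C$, and Gronwall gives a bounded absorbing set in $\cH$. Integrating the differential inequality over $[t,t+r]$ also gives $\int_t^{t+r}\int_\Omega|\nabla u_i^j|^2 \le C$, the analogue of \eqref{eq:1dL2Bound-c}.

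Next I would obtain $L^{2k}$ bounds for all $k$ by multiplying the $u_i$-equations by $(u_i^j)^{2k-1}$ and the $v_i$-equations by $(v_i^j)^{2k-1}$, exactly as in the two-variable model problem \eqref{eq:2d}: the diffusion gives a good sign term $-\,(2k-1)q^j\int_\Omega (u_i^j)^{2k-2}|\nabla u_i^j|^2$, the dominant reaction term $-\delta_1\int_\Omega|u_i^j|^{p+2k-2}$ controls all lower-order contributions (here the hypothesis on $\sigma$ for general odd exponent $k$ is used to handle $v$), and one gets $\int_\Omega(|u_i^j|^{2k}+|v_i^j|^{2k})\le C_k$ for $t$ large. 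Then, for $H^1$ regularity of $u$, multiply the $u_i$-equations by $-\Delta u_i^j$ and use \eqref{Ineq:F2} with Young to bound $\int_\Omega F^j\Delta u_i^j$ by $\tfrac12\int_\Omega(\Delta u_i^j)^2$ plus powers of $u$ and $v$ already bounded in $L^q$; the coupling term is treated similarly via \eqref{Ineq:Hi}. Combined with the time-integrated gradient bound, the uniform Gronwall lemma (appendix) gives $\int_\Omega|\nabla u_i^j|^2\le C$. For $v$, differentiating the $v_i$-equation in space, testing against $\nabla v_i^j$, and using \eqref{ineq:Phix} (together with the $L^2$-bound on $\nabla u_i^j$ and on $u_i$) yields $\int_\Omega|\nabla v_i^j|^2\le C$, as in the model computation for \eqref{eq:2d}.

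At this point we have a bounded absorbing set in $(H^1(\Omega))^{nd}$, which injects compactly into $\cH$, so the semigroup is asymptotically compact; together with the $\cH$-absorbing set this gives the existence of the global attractor $\mathcal{A}$, connected since it lies in the connected space $\cH$ and the absorbing ball is connected. Finally, the uniform $L^q$ bounds for all $q<\infty$ together with the $H^1$ bounds feed into the bootstrap/regularity result of Rothe \cite{Rothe} (as already used for the two-variable system) to conclude that $\mathcal{A}$ is bounded in $(L^\infty(\Omega))^{nd}$. The main obstacle I anticipate is purely bookkeeping: carefully tracking that every cross-term produced by the coupling $H_i$, by the $v$-feedback $\delta_3|u_i^j|\sum|v_i^k|$, and by $\Phi$ can be absorbed by the dissipative terms $-\delta_1|u_i^j|^p$ and $-\sigma|v_i^j|^{k+1}$ uniformly in $i$ — the condition $p_1<p-1$ and the structural assumption on $\sigma(x)$ are exactly what make this possible, but verifying it at every power $2k$ and for the $H^1$ estimate requires care with the Young-inequality exponents.
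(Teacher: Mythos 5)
Your proposal is correct and follows essentially the same route as the paper: an $L^2$ absorbing set via the dissipative structure of \eqref{Ineq:F} and the hypothesis on $\sigma(x)$, $L^{2k}$ bounds for all $k$, an $H^1$ absorbing set for $u$ via the $-\Delta u_i^j$ multiplier and the uniform Gronwall lemma (and for $v$ by differentiating its equation in space), compactness from the injection $H^1\hookrightarrow L^2$, and finally the $L^\infty$ bound from Rothe's smoothing estimate for the linear parabolic semigroup. The only cosmetic difference is that the paper writes the last step explicitly through the variation-of-constants formula with the $L^{2N}\to L^\infty$ bound on $\mathcal{T}(t)$, which is what your appeal to Rothe's regularity result amounts to.
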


The proof of theorem \ref{ThPrincipal} relies on a general result that gives the existence of the global attractor in Banach Spaces, see \cite{Temam}.  If there exists a bounded absorbing set $\mathcal{B}$ in $\cH$,  which means that $\mathcal{B}$ verifies the following condition:
\begin{equation}
\label{BorneAbsorbant}
\mbox{for all bounded set } B \subset \cH, \exists t_B; \forall t>t_B, S(t)B \subset \mathcal{B},
\end{equation}
 and if,

\begin{equation}
\label{S1RelaComp}
\mbox{for all bounded set } B \subset \cH, \exists t_B; \cup_{t\geq t_B}S(t)B \mbox{ is relatively compact in } \cH,
\end{equation}
%
then the $\omega$-limit set of $\mathcal{B}$, is an invariant connected compact set that attracts all the trajectories.
Therefore, we divide the proof of theorem \ref{ThPrincipal} into four parts, which for the reader's convenience,
we present as different lemmas. Before going into details let us briefly present the sketch of the proof.
We first show, in lemma \ref{Lem-absorbsetinH}, the existence of a bounded absorbing set in $\cH$, that is \eqref{BorneAbsorbant}.
Then, in lemma \ref{Lem-CompTraj}, we prove a result of compacity for trajectories, that is \eqref{S1RelaComp}
. More precisely, we establish the existence of a bounded absorbing set in $(H^1(\Omega))^{nd}$.
The result follows from the compact injection of $H^1(\Omega)$ in $L^2(\Omega)$.
Note that proving lemmas \ref{Lem-absorbsetinH} and \ref{Lem-CompTraj} gives the global attractor existence.
After, in lemma \ref{Lemma-Lq-Bounds}, we obtain the $(L^\infty(\Omega))^{nd}$-bound:
we show $(L^q(\Omega))^{nd}$-bounds for all $q \in \N$. Finally,  we prove theorem \ref{ThPrincipal},
by using a result that links $L^{\infty}$ and $L^q$-norms for linear parabolic equations, see \cite{Rothe}.\\
Let us introduce the following notations:
\begin{equation}
\label{NormLp}
|u|_{p,\Omega}=\big(\int_\Omega |u|^pdx \big)^{\frac{1}{p}},
\end{equation}
if $u$ is a real or vector valued function.
We also use
\begin{equation*}
||u||
\end{equation*}
to denote the euclidian norm for a real vector.
 The lemma below establishes the existence of a bounded absorbing set in  $\cH$.
\begin{lemma}
\label{Lem-absorbsetinH}
There exists an absorbing bounded set in $\cH$, that is, there is a constant $K$, such that for all initial conditions in $ \cH$ :
\begin{equation*}
(|u|_{2,\Omega}^2+|v|_{2,\Omega}^2)(t)\leq K \mbox{ for } t \mbox{ large enough}.
\end{equation*}
\end{lemma}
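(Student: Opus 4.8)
The plan is to mimic the model computations carried out in the introduction for the scalar equation \eqref{eq:1d} and the two-variable system \eqref{eq:2d}, now in the general $n$-node, $d$-variable setting of \eqref{eq:equv}. First I would test the $u_i$-equations with $u_i^j$ and the $v_i$-equations with $v_i^j$, sum over all $i\in\{1,\dots,n\}$ and all components, and integrate over $\Omega$. Using Green's formula, the diffusion term contributes $-\sum_{i,j} q^j |\nabla u_i^j|_{2,\Omega}^2 \le 0$, which we simply discard. The reaction term is handled by assumption \eqref{Ineq:F}: the dominant contribution is $-\delta_1\sum_{i,j}|u_i^j|^p_{p,\Omega}$, the cross term $\delta_2 |u_i^j| \sum_k |u_i^k|^{p_1}$ is absorbed into the $p$-th power term via Young's inequality since $1+p_1 < p$, the term $\delta_3 |u_i^j|\sum_k |v_i^k|$ is split by Young's inequality into a small multiple of $|u_i^j|^2$ (again absorbed by the $p$-power term, $p>2$) plus a multiple of $|v_i^k|^2$, and $\delta_4$ integrates to a constant.

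Next I would bound the coupling terms: $\int_\Omega u_i^j H_i^j$ is estimated using \eqref{Ineq:Hi} and Young's inequality, giving something like $\varepsilon |u_i^j|^p_{p,\Omega}$ (absorbable, since $1+p_1<p$) plus lower-order terms and a constant. For the $v_i$-equations, the $-\sigma(x)$ term is handled by the coercivity hypothesis $\sum_{j,l}\sigma_{jl}(x)v_l v_j^k > \sigma \sum_j v_j^{k+1}$ with $k=1$, yielding $-\sigma\sum_{i,j}|v_i^j|_{2,\Omega}^2$; the term $\int_\Omega v_i^j \Phi^j(x,u_i)$ is bounded using \eqref{ineq:Phi} and Young's inequality by $\tfrac{\sigma}{2}|v_i^j|_{2,\Omega}^2$ plus a multiple of $\sum_l |u_i^l|_{2,\Omega}^2$ (absorbable by the $p$-power term) plus a constant. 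Collecting everything, one arrives at a differential inequality of the form
\begin{equation*}
\frac{d}{dt}\Big(\sum_{i,j}|u_i^j|_{2,\Omega}^2 + \sum_{i,j}|v_i^j|_{2,\Omega}^2\Big) + c_1 \sum_{i,j}|u_i^j|_{p,\Omega}^p + c_2 \sum_{i,j}|v_i^j|_{2,\Omega}^2 \le K_0,
\end{equation*}
for positive constants $c_1,c_2,K_0$. Since $|\Omega|$ is finite and $p>2$, Hölder's inequality gives $|u_i^j|_{2,\Omega}^2 \le |\Omega|^{1-2/p}|u_i^j|_{p,\Omega}^2 \le C(1+|u_i^j|_{p,\Omega}^p)$, so the dissipative terms dominate $\sum_{i,j}|u_i^j|_{2,\Omega}^2 + \sum_{i,j}|v_i^j|_{2,\Omega}^2$ up to a constant, and we obtain $\frac{d}{dt}y(t) \le -\delta y(t) + K$ with $y(t) = |u|_{2,\Omega}^2 + |v|_{2,\Omega}^2$ (writing $u=(u_i^j)$, $v=(v_i^j)$). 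Gronwall's lemma then yields $y(t)\le y(0)e^{-\delta t} + \tfrac{K}{\delta}(1-e^{-\delta t})$, so $y(t)\le K'$ for $t$ large enough, uniformly over bounded sets of initial data, which is the claim.

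The main obstacle is bookkeeping: making sure that every cross-term produced by the nonlinearity $F$, the coupling $H_i$, and the source $\Phi$ is genuinely absorbable, i.e. that the exponents line up ($1+p_1<p$, $p>2$) so that each such term can be dominated by a small fraction of the ``good'' terms $-\delta_1|u_i^j|_{p,\Omega}^p$ and $-\sigma|v_i^j|_{2,\Omega}^2$, with the remainder an additive constant. Once the correct Young-inequality splittings are fixed (choosing the small parameters so that the accumulated coefficient in front of $|u_i^j|^p_{p,\Omega}$ stays strictly positive after summing the finitely many contributions), the rest is the standard Gronwall argument. No compactness or regularity beyond the a priori $L^2$ estimate is needed at this stage.
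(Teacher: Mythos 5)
Your proposal is correct and follows essentially the same route as the paper: multiply each scalar equation by $u_i^j$ or $v_i^j$, sum and integrate, use Green's formula, the decay assumption \eqref{Ineq:F}, the coercivity of $\sigma(x)$, and Young's inequality to absorb the cross terms coming from $F$, $H_i$ and $\Phi$ into the dissipative terms, and conclude by Gronwall. The only cosmetic difference is that the paper retains the terms $\delta_2|u|_{p,\Omega}^p+\delta_3|\nabla u|_{2,\Omega}^2$ on the left of the resulting differential inequality \eqref{ineq:Abs} (they are reused when integrating between $t$ and $t+r$ in the proof of Lemma \ref{Lem-CompTraj}), whereas you discard the gradient term at once; this is immaterial for the present lemma.
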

\begin{proof}
The proof mainly relies on the presence of $-\delta_1 |u_i^j|^p$  in \eqref{Ineq:F} and the  condition on $\sigma(x)$. In the following, parameters $\delta, c$ and $K$ are generic constants.
We multiply each scalar equation in \eqref{eq:equv}, for all  $i$ and $j$, by $u_i^j$ or $v_i^j$, and sum. We show, by using  Young inequality, $ab\leq \frac{a^p}{\epsilon^pp}+\frac{\epsilon^qb^q}{q}$, and as it has been done for equation \eqref{eq:2d}  that:
\begin{equation}
\label{ineq:Abs}
\frac{d}{dt}(|u|_{2,\Omega}^2+|v|_{2,\Omega}^2)+\delta_1(|u|_{2,\Omega}^2+|v|_{2,\Omega}^2)+\delta_2|u|_{p,\Omega}^p+\delta_3|\nabla u|_{2,\Omega}^2\leq \delta_4.
\end{equation}
Then by Gronwall inequality we have for $t$ large enough:
\begin{equation}
\label{ineq:Abs3}
(|u|_{2,\Omega}^2+|v|_{2,\Omega}^2)(t)\leq K.
\end{equation}
\end{proof}

Now, we will show the compactness of trajectories in $\cH$, by establishing \eqref{S1RelaComp}.
We have:
\begin{lemma}
\label{Lem-CompTraj}
There exists an absorbing bounded set in $(H_1(\Omega))^{nq}$, that is, there is a constant $K$, such that for all initial conditions in $\cH$  and $t$ large enough,
\begin{equation*}
|\nabla u(t)|_{2,\Omega}\leq K.
\end{equation*}
\end{lemma}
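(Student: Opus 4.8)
The plan is to reproduce, in the present $n$-node, $d$-variable setting, the $H^{1}$-estimate carried out in the introduction for the model systems \eqref{eq:1d} and \eqref{eq:2d}. For each $i\in\{1,\dots,n\}$ and each diffusive index $j\in\{1,\dots,s\}$ I multiply the corresponding scalar equation of \eqref{eq:equv} by $-\Delta u_i^{j}$, integrate over $\Omega$, and sum. Using Green's formula together with the Neumann boundary conditions (and $\partial_t\nabla u_i^j=\nabla\partial_t u_i^j$) this gives
\begin{equation*}
\frac{1}{2}\frac{d}{dt}|\nabla u|_{2,\Omega}^{2}+\sum_{i,j}q^{j}|\Delta u_i^{j}|_{2,\Omega}^{2}=-\sum_{i,j}\int_{\Omega}\bigl(F^{j}(u_i,v_i)+H_i^{j}(u_1,\dots,u_n)\bigr)\,\Delta u_i^{j}.
\end{equation*}
A Young inequality absorbs a fraction of each $q^{j}|\Delta u_i^{j}|_{2,\Omega}^{2}$ into the left-hand side, leaving on the right a sum of $|F^{j}(u_i,v_i)|_{2,\Omega}^{2}$ and $|H_i^{j}(u_1,\dots,u_n)|_{2,\Omega}^{2}$. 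By the growth bounds \eqref{Ineq:F2} and \eqref{Ineq:Hi} these are controlled by $\int_{\Omega}|u_i^{k}|^{2(p-1)}$, $\int_{\Omega}|u_k^{j}|^{2p_1}$ and $\int_{\Omega}|v_i^{k}|^{2}$, up to additive constants.

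The key point is therefore to have uniform $L^{q}(\Omega)$-bounds on the diffusive components for $q$ up to $2(p-1)$. These are obtained by exactly the energy method of Lemma~\ref{Lem-absorbsetinH}, now multiplying the $j$-th diffusive equation by $|u_i^{j}|^{2k-2}u_i^{j}$, summing and invoking \eqref{Ineq:F}, \eqref{Ineq:Hi}, \eqref{ineq:Phi}, the coercivity hypothesis on $\sigma(x)$, Young's inequality and Gronwall's lemma — precisely as in the passage from \eqref{eq:2d} to its $L^{2k}$-bound (this family of estimates is what Lemma~\ref{Lemma-Lq-Bounds} records). Granting them, and using also the $L^{2}$-bound \eqref{ineq:Abs3} for the $v_i^{k}$, the right-hand side above is bounded by a constant, so that for $t$ large $\frac{d}{dt}|\nabla u|_{2,\Omega}^{2}\le K$. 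This differential inequality by itself does not bound $|\nabla u|_{2,\Omega}^{2}$, but integrating \eqref{ineq:Abs} between $t$ and $t+r$ yields $\int_{t}^{t+r}|\nabla u|_{2,\Omega}^{2}\le K$ for $t$ large; the uniform Gronwall lemma (appendix) then gives $|\nabla u(t)|_{2,\Omega}\le K$ for $t$ large, uniformly over bounded sets of initial data, which is the assertion of the lemma.

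To obtain an absorbing set in the full space $(H^{1}(\Omega))^{nd}$ — hence relative compactness in $\cH$ — it remains to bound $\nabla v_i$, which is done exactly as for \eqref{eq:2d}: taking the spatial gradient of $v_{it}=-\sigma(x)v_i+\Phi(x,u_i)$, taking the scalar product with $\nabla v_i$ and integrating, one uses the coercivity of $\sigma(x)$ (case $k=1$), the bounds \eqref{ineq:Phix} and \eqref{ineq:Phiu} on the derivatives of $\Phi$, the boundedness of the derivatives of $\sigma$, the $H^{1}$-bound on $u_i$ just proved, and Young's and Gronwall's lemmas, to get $|\nabla v_i(t)|_{2,\Omega}\le K$ for $t$ large. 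Thus every trajectory eventually enters a fixed bounded set of $(H^{1}(\Omega))^{nd}$, which by the compact embedding $H^{1}(\Omega)\hookrightarrow L^{2}(\Omega)$ is relatively compact in $\cH$, establishing \eqref{S1RelaComp}; together with Lemma~\ref{Lem-absorbsetinH} this already yields the existence of the global attractor. I expect the main obstacle to be the first step: the multiplier $-\Delta u_i^{j}$ produces the power $2(p-1)>p$ of $u_i^{j}$, so the $H^{1}$-estimate cannot be closed from the $L^{2}$- and $L^{p}$-bounds of Lemma~\ref{Lem-absorbsetinH} alone and genuinely requires the whole scale of $L^{q}$-bounds, and one must use the uniform — not the classical — Gronwall lemma because the differential inequality for $|\nabla u|_{2,\Omega}^{2}$ carries no dissipative term.
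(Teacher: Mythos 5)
Your proposal is correct and follows essentially the same route as the paper: multiply by $-\Delta u_i^{j}$, bound the right-hand side via \eqref{Ineq:F2} and \eqref{Ineq:Hi} by $|u|_{2p-2,\Omega}^{2p-2}+|v|_{2,\Omega}^{2}$ (with the $L^{2p-2}$-bound borrowed from the techniques of Lemma~\ref{Lemma-Lq-Bounds}), close the estimate with the time-integrated bound from \eqref{ineq:Abs} and the uniform Gronwall lemma, and treat $\nabla v_i$ by differentiating the non-diffusive equation in $x$. You also correctly identify the two genuine subtleties — the need for the full scale of $L^{q}$-bounds and for the uniform rather than classical Gronwall lemma — which is exactly how the paper's argument is structured.
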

\begin{proof}
By integrating \eqref{ineq:Abs} between $t$ and $t+r$, we have for $t$ large enough and $\forall r>0$:
\begin{equation}
\label{Ineqgrauttr}
\int_t^{t+r}|\nabla u|_{2,\Omega}^2+\delta\int_t^{t+r}|u|_{p,\Omega}^p\leq K+\delta r.
\end{equation}
Now, we multiply each component of the first equation of \eqref{eq:equv} by $-\Delta u_i^j$, we integrate and sum over $i$ and $j$. We obtain:
\begin{equation}
\frac{1}{2}\frac{d}{dt}|\nabla u|_{2,\Omega}^2=-\sum_{i=1}^n\sum_{j=1}^s\left( \int_\Omega (F^j(u_i,v_i)\Delta u_i^j+q_j\Delta u_i^j\Delta u_i^j+H_i^j(u_1,...,u_n)\Delta u_i^j\right),
\end{equation}
which, thanks to \eqref{Ineq:F2} and \eqref{Ineq:Hi} leads to:

\begin{align*}
\frac{1}{2}\frac{d}{dt}|\nabla u|_{2,\Omega}^2+q|\Delta u|_{2,\Omega}^2\leq & c\sum_{i=1}^n\sum_{j=1}^s \int_{\Omega}\Big{(}1+\sum_{l=1}^s|u_i^l|^{p-1}+\sum_{k=1}^{d-s}|v_i^k|+\sum_{l=1}^n|u_l^j|^{p_1}\Big{)}|\Delta u_i^j|\\
&\leq c\sum_{i=1}^n\sum_{j=1}^s\int_\Omega\Big(\frac{c}{2q}\big{(}1+\sum_{l=1}^s|u_i^l|^{p-1}+\sum_{k=1}^{d-s}|v_i^k|+\sum_{l=1}^n|u_l^j|^{p_1}\big{)}^2+\frac{q}{2c}(\Delta u_i^j)^2\Big),
\end{align*}
where $q=\min_{i\in\{1,...,s\}}q_i$ and $c$ is a generic constant. It follows that:
\begin{align*}
\frac{1}{2}\frac{d}{dt}|\nabla u|_{2,\Omega}^2
&\leq c\sum_{i=1}^n\sum_{j=1}^s\int_\Omega\big{(}1+\sum_{l=1}^s|u_i^l|^{2p-2}+\sum_{k=1}^{d-s}|v_i^k|^2+\sum_{l=1}^n|u_l^j|^{2p_1}\big{)}\\
&\leq c(1+|u|_{2p-2,\Omega}^{2p-2}+|v|_{2,\Omega}^2).
\end{align*}
Thanks to the techniques we use in lemma \ref{Lemma-Lq-Bounds}, we can prove that for $t$ large enough:
\begin{equation}
|u|_{2p-2,\Omega}^{2p-2}\leq K.
\end{equation}
Then we can apply the uniform Gronwall lemma (see appendix), and show that:
\begin{equation*}
|\nabla u|_{2,\Omega}^2(t)\leq K \mbox{ for t  large enough}.
\end{equation*}
It remains to find a bound for $|\nabla v|_{2,\Omega}$.
For all $i\in \{1,...,n\}$, and  $k\in \{1,...,N\}$, we have:
\begin{equation}
\label{eq:wi}
\frac{d}{2dt}|v_{ix_k}|^2_{2,\Omega}=\int_\Omega(-\sigma'_{x_k}(x)v_{i}\cdot v_{ix_k}-\sigma(x) v_{ix_k}\cdot v_{ix_k} +\Phi'_{x_k}(x,u_i)\cdot v_{ix_k}+\sum_{j=1}^su_{ix_k}^j\Phi'_{u_i^j}(x,u_i)\cdot v_{ix_k}),
\end{equation}
with $v_{ix_k}=\frac{\partial v_i}{\partial x_k} $.
Using Young and Cauchy-Schwarz inequalities, we find, thanks to \eqref{ineq:Phix} and  \eqref{ineq:Phiu},  for enough large time:
\begin{equation}
\label{eq:wi2}
\frac{d}{dt}|v_{ix_k}|^2_{2,\Omega}\leq -\frac{\sigma}{2}|v_{ix_k}|^2_{2,\Omega}+K.
\end{equation}
Finally, the result follows by using Gronwall inequality and summing over $i$.
\end{proof}

\begin{lemma}
\label{Lemma-Lq-Bounds}
For all $q\in \N$, there exists an absorbing bounded set in $(L^q(\Omega))^{nd}$, that is, for all $q\in \N$ there is a constant $K_q$ such that for all initial conditions in $ \cH$ :
\begin{equation*}
(|u|_{q,\Omega}^q+|v|_{q,\Omega}^q)(t)\leq K_q \mbox{ for } t \mbox{ large enough}.
\end{equation*}
\end{lemma}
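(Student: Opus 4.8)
The plan is to prove the $(L^q(\Omega))^{nd}$-bounds by induction on $q$, following the pattern already demonstrated for the two-variable model system \eqref{eq:2d} in the introduction. The base case $q=1$ (and $q=2$) is exactly Lemma \ref{Lem-absorbsetinH}. For the inductive step, I would fix an even integer $q=2k$ and test each scalar equation of \eqref{eq:equv} against the appropriate power: multiply the $u_i^j$-equation by $(u_i^j)^{2k-1}$ and the $v_i^j$-equation by $(v_i^j)^{2k-1}$, integrate over $\Omega$, and sum over $i\in\{1,\dots,n\}$ and the relevant component indices. The diffusion term $\int_\Omega q_j\Delta u_i^j (u_i^j)^{2k-1} = -(2k-1)q_j\int_\Omega (u_i^j)^{2k-2}|\nabla u_i^j|^2 \le 0$ is discarded. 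The key structural input is \eqref{Ineq:F}: the term $u_i^j F^j(u_i,v_i)$ contributes, after multiplication by $(u_i^j)^{2k-2}$, a dominant good term $-\delta_1\int_\Omega |u_i^j|^{p+2k-2}$ together with the lower-order terms $\delta_2|u_i^j|^{2k-1}\sum_k|u_i^k|^{p_1}$, $\delta_3|u_i^j|^{2k-1}\sum_k|v_i^k|$ and $\delta_4|u_i^j|^{2k-2}$. Similarly the coupling term is controlled by \eqref{Ineq:Hi}, giving $\delta_4|u_i^j|^{2k-1}(1+\sum_l|u_l^j|^{p_1})$, and the $v$-equation gives $-\sum \sigma_{jl}v_l v_j^{2k-1}\le -\sigma\int_\Omega |v_j|^{2k}$ from the hypothesis on $\sigma(x)$, plus $\Phi^j(x,u_i)(v_i^j)^{2k-1}$ bounded via \eqref{ineq:Phi} by $\delta_6|v_i^j|^{2k-1}(1+\sum_l|u_i^l|)$.

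Next I would absorb every cross term by Young's inequality in the form $ab\le \epsilon a^{p'}/p' + b^{q'}/(\epsilon^{q'/p'}q')$ with exponents chosen so that each $u$-factor lands on $|u|^{p+2k-2}$ (absorbable into the $-\delta_1\int|u_i^j|^{p+2k-2}$ reserve with a small constant) and each $v$-factor lands on $|v|^{2k}$ (absorbable into $-\sigma\int|v_i^j|^{2k}$). The arithmetic that makes this work is precisely $p_1<p-1$, which forces the exponents appearing on $u$ after Young's inequality to stay strictly below $p+2k-2$ — for instance $|u_i^j|^{2k-1}|u_i^k|^{p_1}$ splits into powers $a(2k-1)$ and $a'p_1$ with $a,a'$ conjugate, and one checks $p_1 < p-1 < p+2k-2$ leaves room — while the purely-$v$ cross terms produce exponents at most $2k$. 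After absorption one is left with an inequality of the schematic form
\begin{equation*}
\frac{d}{dt}\big(|u|_{2k,\Omega}^{2k}+|v|_{2k,\Omega}^{2k}\big)+\tfrac{\delta_1}{2}|u|_{p+2k-2,\Omega}^{p+2k-2}+\tfrac{\sigma}{2}|v|_{2k,\Omega}^{2k}\le C\big(1+|u|_{m,\Omega}^{m}+|v|_{m',\Omega}^{m'}\big),
\end{equation*}
where the right-hand exponents $m,m'$ are strictly smaller than the left-hand ones; in the worst case one bounds the right side by lower $L^r$-norms with $r<2k$, which by the inductive hypothesis (or by interpolation plus the already-known bounds) are under control, or one simply uses $|u|_{m,\Omega}^m\le \varepsilon |u|_{p+2k-2,\Omega}^{p+2k-2}+C_\varepsilon|\Omega|$ since $m<p+2k-2$ and $|v|_{m',\Omega}^{m'}\le \varepsilon|v|_{2k,\Omega}^{2k}+C_\varepsilon|\Omega|$. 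This yields a differential inequality $\frac{d}{dt}y + c\,y \le K$ with $y=|u|_{2k,\Omega}^{2k}+|v|_{2k,\Omega}^{2k}$, and Gronwall's lemma gives the claimed absorbing set in $(L^{2k}(\Omega))^{nd}$. Since $L^{2k}\hookrightarrow L^q$ on the bounded domain $\Omega$ for $q\le 2k$, this covers all $q\in\N$.

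The main obstacle — really a bookkeeping obstacle rather than a conceptual one — is the careful choice of the Young-inequality exponents so that \emph{simultaneously} every mixed $u$-term is subordinate to $|u|^{p+2k-2}$ and every $v$-term subordinate to $|v|^{2k}$, with coefficients small enough to be absorbed by $\delta_1$ and $\sigma$ respectively; the terms of mixed $u$–$v$ type (namely $\delta_3|u_i^j|^{2k-1}|v_i^k|$ from \eqref{Ineq:F} and $\delta_6|v_i^j|^{2k-1}|u_i^l|$ from $\Phi$) must be split so that the $u$-part goes to $|u|^{p+2k-2}$ and the $v$-part to $|v|^{2k}$, which is possible because $2k < p+2k-2$ (as $p>2$) and $2k-1<2k$ leave slack on both sides. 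One also needs the summation over the $n$ nodes to be harmless: the coupling bound \eqref{Ineq:Hi} involves $\sum_{l=1}^n|u_l^j|^{p_1}$, but after summing over $i$ this contributes a finite constant multiple of $\sum_l|u_l^j|^{p_1}$-type integrals which, via Young, feed into the $|u|^{p+2k-2}$ reserve with a factor depending on $n$ — finite, hence acceptable. Everything else (Green's formula on the discarded good terms, Cauchy–Schwarz, the elementary inequality $\sum a_i^q \le (\sum a_i^2)^{q/2}$ style passages) is routine and parallels the $L^{2k}$-estimate already carried out for \eqref{eq:2d} in the introduction.
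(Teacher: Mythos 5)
Your proposal is correct and follows essentially the same route as the paper: test against $|u_i^j|^{2k-2}u_i^j$ and $|v_i^j|^{2k-2}v_i^j$, discard the nonnegative diffusion term, use \eqref{Ineq:F}, \eqref{Ineq:Hi}, the coercivity of $\sigma(x)$ and \eqref{ineq:Phi}, absorb cross terms by Young's inequality into the reserves $-\delta_1|u|^{p+2k-2}$ and $-\sigma|v|^{2k}$, and conclude by Gronwall. The only (harmless) differences are that you phrase it as an induction on $q$, which is not actually needed, and that you treat the $u$- and $v$-estimates simultaneously rather than sequentially, which is if anything slightly cleaner given the mixed $u$--$v$ terms.
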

\begin{proof}
In the following, $K$ is a generic constant.
We multiply the first equation of \eqref{eq:equv} by $|u^j_i|^{2k-2}u_i^j$, for all $i,j$ and integrate. We obtain:
\begin{equation}
\label{eq:uijalk}
\frac{1}{2k}\frac{d}{dt}\int_\Omega |u_i^j|^{2k}=\int_\Omega F(u_i,v_i)|u^j_i|^{2k-2}u_i^j +\int_\Omega q^j\Delta u_i^j|u^j_i|^{2k-2}u_i^j + H_i|u^j_i|^{2k-2}u_i^j.
\end{equation}
But we have:
\begin{equation}
\label{a}
 \int_\Omega q^j\Delta u_i^j|u^j_i|^{2k-2}u_i^j=-(2k-1)\int_\Omega|\nabla u_i^j|^2|u^j_i|^{2k-2}.
\end{equation}
Thanks to \eqref{Ineq:F} and to \eqref{Ineq:Hi}, it follows that:
\begin{equation}
\frac{1}{2k}\frac{d}{dt}\int_\Omega |u_i^j|^{2k}\leq -\delta_1{'}\int_\Omega |u^j_i|^{2k-2+p}  +\delta_2{'}\sum_{k=1}^{d-s}\int_\Omega|u_i^j||v_i^k| + \sum_{k=1}^n\int_\Omega |u_i^k|^{p_1}|u^j_i|^{2k-2}u_i^j.
\end{equation}
Hence by Young inequality,
\begin{equation}
\frac{1}{2k}\frac{d}{dt} |u|^{2k}_{2k,\Omega}\leq  -\delta_1 |u|_{2k,\Omega}^{2k}+K.
\end{equation}
By Gronwall lemma:
\begin{equation}
 |u|^{2k}_{2k,\Omega}\leq K,
\end{equation}
for $t$ large enough.

Besides, by similar techniques, we have:
\begin{equation}
\frac{d}{dt}|v_i|_{2k,\Omega}^{2k}=-\sigma|v_i^j|_{2k,\Omega}^{2k}+K.
\end{equation}
Then, the result follows by Gronwall Lemma.
\end{proof}

\begin{proof}[Proof of theorem \ref{ThPrincipal}]
It remains to prove the $L^\infty$-bound .
For all $i\in \{1,...,n\}$, and for all $j\in \{1,...,s\}$, we have:
\begin{equation}
u_i^j(t)=\mathcal{T}(t)u^j_{i0}+\int_0^t\mathcal{T}(t-\tau)\{F^j(u_i^j,v_i^j)(\tau)+H_i^j(u_1(\tau),...,u_n(\tau))+u_i^j(\tau)\}
\end{equation}
where $\mathcal{T}$ represents the semi-group associated with $\frac{\partial \varphi}{\partial t}-q^j\Delta \varphi + \varphi=0$ and  Neumann boundary conditions.
We know, see \cite{Rothe}, lemma 3 p 25, that  $\mathcal{T}$ verifies:
\begin{equation}
|\mathcal{T}(t)\varphi|_{\infty,\Omega}\leq cm(t)^{-\frac{1}{2}}e^{-\lambda t}|\varphi|_{2N,\Omega},
\end{equation}
where $m(t)=\min(1,t)$, $\lambda$ is the smallest eigenvalue of the operator  $I-q^j\Delta$, and $c$ is a positive constant.
This allows us to conclude.
\end{proof}
\section{Identical synchronization  of a  network of $n$ reaction-diffusion systems}
\label{SecSynchro}
Now, as we have shown the existence of the global attractor, we can think about synchronization of solutions within this attractor. Therefore, in the present section, we focus on this ubiquitous  phenomenon  and restrict ourselves to identical synchronization, in  networks with linear  coupling. We will  determine sufficient coupling strength values ensuring the identical synchronization in complex networks \eqref{eq:equv}. We refer to  \cite{A5, C1, C1p,Dorfler1, Dorfler2, Pikovsky} for relevant works on the synchronization phenomenon. These values  reveal the dependence on the number of nodes, the coupling configuration, the properties of each subsystem. Such a result holds in case  of spatial heterogeneity. This point will be discussed in more details in the last section. Technically, to establish the synchronization, we exhibit a Lyapunov function for the network. Since we obtained $L^\infty$-bounds in the previous section, it is always possible to find such a Lyapunov function provided that the coupling strength  is large enough. We rely on previous works  for networks of ODE's (see \cite{B4, B5}) and assume that the connectivity matrix has zero row and column sums. However, we extend the results to networks of reaction-diffusion systems and improve computations, which lead us to find the same threshold synchronization values, see below and theorems \ref{CSSync} and \ref{th:paths}.
\begin{definition}[see \cite{Ambrosio1}]\label{t4}
Let $U(t)=(U_1(t), U_2(t),..., U_n(t))$ be a network with a given topology. We say that $U$ synchronizes identically if,
\[\mathop {\lim }\limits_{t \to  + \infty } \sum_{i = 1}^{n - 1} | U_i(t) - U_{i + 1}(t) |_{2,\Omega }   = 0.\]
where notations are those of \eqref{eq:equv} and \eqref{NormLp}.
\end{definition}
We consider the network,
\begin{equation}
\label{eq:equvLC}
\left\{ \begin{array}{rcl}
  u_{it} &=& F({u_i,v_i}) + Q\Delta {u_i} + \sum_{k=1}^nc_{ik}u_k,\quad i\in \{1,...,n\} \\
 v_{it} &=&  - \sigma (x)v_i + \Phi (x,u_i),
 \end{array} \right.
 \end{equation}
 with Neumann Boundary conditions on $\partial \Omega$
which is a particular case of \eqref{eq:equv} with $U_i=(u_i,v_i)$, where :\[H_i(u_1,....,u_n)=\sum_{k=1}^nc_{ik}u_k.\]
We will assume that the matrix $G=(c_{ik})_{ 1\leq i,k \leq n}$  has vanishing row and column-sums and non-negative off-diagonal elements, i.e., $c_{ik}\geq 0$ for $i\neq k$, and
\begin{equation}
\label{eq:VanRowSum}
c _{ii} =  - \sum_{k = 1,k \ne i}^n c _{ik}=- \sum_{k = 1,k \ne i}^n c_{ki}.
\end{equation}
The  connectivity matrix $G$ defines the graph topology as well as the coupling strength between nodes. Indeed, there is an edge between node $i$ and node $k$ if and only if  $c_{ik}>0$.  We consider here an arbitrary connected and directed network of  linearly coupled RD systems satisfying \eqref{eq:VanRowSum}. Obviously, symmetrically networks with vanishing row-sums are a particular case of the one considered here.
 In order to ensure the existence of  the global attractor, we assume that the network \eqref{eq:equvLC} verifies the assumptions \eqref{Ineq:F}-\eqref{ineq:Phiu}. Note that \eqref{Ineq:Hi} holds automatically thanks to the linear coupling.
 Before going more into details, let us summarize the main ideas of the section.  We establish the synchronization  result in theorem \ref{CSSync}  which relies on the statement of a Lyapunov function $V$  that reads as the sum of the norms of all  vectors $U_i-U_j$. More precisely,  the first step exhibits a diagonal definite positive  matrix $A$ that annihilates the effects of nonlinear terms of \eqref{eq:equvLC}, this is done in lemma  \ref{lemma:matA}, then the proof follows from computations that use property \eqref{eq:VanRowSum}, which itself uses all the couples $(i,j)$ in $V$. However, the result of theorem \ref{CSSync} can not be applied directly to concrete networks. This is the purpose of  theorem \ref{th:paths}, which gives a sufficient condition for synchronization applicable in networks with arbitrary topology. Its proof relies on the computation of the sum of all minimal lengths joining any couple of nodes passing through a given edge of the graph. This idea comes from  lemma \ref{lem-MajNormeWij}  which  bounds all terms in the left hand side of \eqref{eq:CSSync}  (in which all the couples $(i,j)$ appear) by terms of the right hand side (only the couples $(i,j)$ corresponding to non zero coupling, i.e. $c_{ij}\neq 0$, i.e. edges of the graph,  appear).

We start with the lemma \ref{lemma:matA} that establishes the existence of a diagonal definite-positive matrix $A$ that annihilates  the effects of nonlinear terms. Let us denote by $X$ an arbitrary vector of $\R^d$, $X_s$ the vector of the $s$ first coordinates and $X_{d-s}$, the vector of the $d-s$ last coordinates of $X$.

\begin{lemma}
\label{lemma:matA}
For each solution $(U_1,U_2,...,U_n)$ of \eqref{eq:equvLC},  there exists a time $T$, a positive constant $\kappa$ and a definite positive diagonal  matrix $A \in \R^{s}\times \R^s$, such that for all $t>T$, for all $X\in \R^d$, and for all $i,j \in \{1,...,n\}$,
\begin{equation*}
\begin{array}{rl}
  \displaystyle   X_s.\left[ {\int_0^1 {DF(\theta {U_j} + (1 - \theta ){U_i})d\theta } } \right]X+&\\
   X_{d-s}.\left(- \sigma (x)X_{d-s} + \left[ {\int_0^1 {D\Phi(\theta {u_j} + (1 - \theta ){u_i})d\theta } } \right]X_{s}  \right)-AX_{s}.X_{s}&\leq -\kappa||X_{d-s}||^2.
 \end{array}
\end{equation*}
\end{lemma}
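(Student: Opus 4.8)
The plan is to reduce the statement to a pointwise-in-$x$ algebraic inequality for a quadratic form in $X\in\R^d$ whose coefficients are bounded uniformly once the solution has entered an $L^\infty$-absorbing set. First I would invoke Theorem \ref{ThPrincipal}: its proof provides an absorbing set of \eqref{eq:equvLC} bounded in $(L^\infty(\Omega))^{nd}$, so for the given solution there are a time $T$ and a radius $R$ with $\|U_i(x,t)\|\le R$ for every $i$, almost every $x\in\Omega$ and every $t>T$ (take $T$ to be the largest of the finitely many entering times). Since the closed ball $\bar B_R\subset\R^d$ is convex, the segments $\theta U_j+(1-\theta)U_i$, and likewise $\theta u_j+(1-\theta)u_i$, remain in $\bar B_R$ for $\theta\in[0,1]$; as $F$ is $C^1$ (polynomial in the typical case) and $\Phi$ satisfies \eqref{ineq:Phiu}, the averaged Jacobians $M:=\int_0^1 DF(\theta U_j+(1-\theta)U_i)\,d\theta$ and $N:=\int_0^1 D\Phi(\theta u_j+(1-\theta)u_i)\,d\theta$ are then bounded by constants $C_1:=\sup_{\bar B_R}\|DF\|$ and $C_2$ (with $C_2$ coming from \eqref{ineq:Phiu}), uniformly in $x$, in $t>T$, and in the pair $(i,j)$.

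Next I would expand the quadratic form. Writing $X=(X_s,X_{d-s})$ and splitting $M=[\,M_s\mid M_{d-s}\,]$ with $M_s\in\R^{s\times s}$, the left-hand side of the asserted inequality is
\[
X_s\cdot M_sX_s+X_s\cdot M_{d-s}X_{d-s}-X_{d-s}\cdot\sigma(x)X_{d-s}+X_{d-s}\cdot NX_s-AX_s\cdot X_s .
\]
The hypothesis on $\sigma$ with $k=1$ gives $X_{d-s}\cdot\sigma(x)X_{d-s}>\sigma\|X_{d-s}\|^2$, which supplies the target negative term. The two cross terms $X_s\cdot M_{d-s}X_{d-s}+X_{d-s}\cdot NX_s$ I would bound by Young's inequality as $\le\alpha\|X_s\|^2+\frac{C}{\alpha}\|X_{d-s}\|^2$ for a constant $C=C(C_1,C_2)$ and any $\alpha>0$; taking $\alpha$ large enough that $C/\alpha\le\sigma/2$ leaves a net contribution $-\tfrac{\sigma}{2}\|X_{d-s}\|^2$ from the diffusive block. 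It then remains only to dominate $X_s\cdot M_sX_s+\alpha\|X_s\|^2\le(C_1+\alpha)\|X_s\|^2$, which is achieved by choosing the diagonal positive matrix $A=aI_s$ with $a\ge C_1+\alpha$; this yields the inequality with $\kappa=\sigma/2$.

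I do not anticipate a real obstacle: once Theorem \ref{ThPrincipal} is available the argument is essentially bookkeeping. The one point that needs genuine care is \emph{uniformity} --- the constants $R$, $C_1$, $C_2$, and hence $A$ and $\kappa$, must be independent of $x\in\Omega$, of $t>T$, and of $(i,j)$; this is exactly why the $L^\infty$-bound of Theorem \ref{ThPrincipal} (and not merely the $L^q$-bounds of Lemma \ref{Lemma-Lq-Bounds}) is needed, and why the dissipativity assumption on $\sigma(x)$ must be used in its quadratic-form version, since $-\sigma(x)$ need not be symmetric and only its symmetric part enters $X_{d-s}\cdot(-\sigma(x)X_{d-s})$ --- which is precisely what that hypothesis controls.
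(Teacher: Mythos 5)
Your argument is correct and is precisely the one the paper intends: the paper's own proof is a single sentence ("follows from the $L^\infty$-bound and Young inequality"), and your write-up simply fills in that same reasoning — uniform bounds on the averaged Jacobians from the $L^\infty$-absorbing set, the $k=1$ case of the coercivity hypothesis on $\sigma(x)$, Young's inequality on the cross terms, and $A=aI_s$ with $a$ large. Your closing remarks on uniformity in $x$, $t$, $(i,j)$ and on the non-symmetry of $\sigma(x)$ are accurate and, if anything, more careful than the original.
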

\begin{proof}
This follows from the $L^\infty$-bound of solutions of \eqref{eq:equv}, and by using young inequality.
\end{proof}

We will now present the main result that furnishes sufficient conditions for identical synchronization. To this aim, we introduce the following notations:
for all $i,j\in\{1,...,n\}$, $w_{ij}=u_j-u_i$, $z_{ij}=v_j-v_i, i,j\in \{1,...,n\}$,  $\epsilon_{ij}=\frac{c_{ij}+c_{ji}}{2} $ and $a=\max_{l\in \{1,...,s\} } A_{ll}$.
\begin{theorem}\label{CSSync}
If we assume that
\begin{equation}
\label{eq:CSSync}
\displaystyle \frac{a}{n}\sum_{i<j} | w_{ij}|_{2,\Omega}^2< \sum_{i<j}^n\epsilon_{ij}|w_{ij}|_{2,\Omega}^2
\end{equation}
 then the system \eqref{eq:equvLC} synchronizes in the sense of definition \ref{t4}.
\end{theorem}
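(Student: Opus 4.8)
The plan is to construct a Lyapunov function for the synchronization error and show it decays. I would define
\[
V(t)=\frac{1}{2}\sum_{i<j}\left(A w_{ij}\cdot w_{ij}+\|z_{ij}\|_{2,\Omega}^2\right)
\]
(integrated over $\Omega$), where $A$ is the diagonal positive matrix furnished by lemma \ref{lemma:matA} and $w_{ij}=u_j-u_i$, $z_{ij}=v_j-v_i$. Since all $U_i$ eventually lie in the global attractor $\mathcal{A}$, which is bounded in $(L^\infty(\Omega))^{nd}$ by theorem \ref{ThPrincipal}, lemma \ref{lemma:matA} applies for $t>T$. The strategy is: differentiate $V$ along trajectories of \eqref{eq:equvLC}, use the mean-value/integral form $F(U_j)-F(U_i)=\big[\int_0^1 DF(\theta U_j+(1-\theta)U_i)\,d\theta\big](U_j-U_i)$ and similarly for $\Phi$, invoke lemma \ref{lemma:matA} to dominate the nonlinear contributions by $-\kappa\|z_{ij}\|^2$ plus the term $a\,|w_{ij}|_{2,\Omega}^2$, and use the diffusion term to get a nonpositive $-\int_\Omega Q\nabla w_{ij}:\nabla w_{ij}$ contribution (after integration by parts with Neumann conditions). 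What remains is the coupling term.

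The key combinatorial step is handling the linear coupling. Writing the equation for $w_{ij}=u_j-u_i$, the coupling contributes $\sum_k (c_{jk}-c_{ik})u_k$, and when paired with $A w_{ij}$ and summed over $i<j$ one uses the vanishing row/column sum property \eqref{eq:VanRowSum} to rewrite $\sum_{i<j}\sum_k(c_{jk}-c_{ik})u_k\cdot A w_{ij}$. The standard identity (as in \cite{B4, B5}) is that this sum equals $-n\sum_{i<j}\epsilon_{ij}\, A w_{ij}\cdot w_{ij}$, using $\epsilon_{ij}=(c_{ij}+c_{ji})/2$ and the fact that every ordered pair appears in $V$. I would carry this out carefully: expand $\sum_k c_{jk}u_k\cdot A w_{ij}$, reindex, and exploit the zero-sum conditions to produce only differences $w_{ij}$; the $1/n$ factor arises because each $w_{ij}$ can be written as a telescoping sum and the cross terms organize by the column-sum condition. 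Bounding $A w_{ij}\cdot w_{ij}\le a\,\|w_{ij}\|^2$ from below by $\|w_{ij}\|^2$ (times the smallest diagonal entry) and from above by $a\|w_{ij}\|^2$, the coupling term is $\le -n\sum_{i<j}\epsilon_{ij}|w_{ij}|_{2,\Omega}^2$ after using $\epsilon_{ij}\ge 0$ on off-diagonal edges.

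Collecting everything, I obtain
\[
\frac{dV}{dt}\le a\sum_{i<j}|w_{ij}|_{2,\Omega}^2-n\sum_{i<j}\epsilon_{ij}|w_{ij}|_{2,\Omega}^2-\kappa\sum_{i<j}\|z_{ij}\|_{2,\Omega}^2
\]
(the diffusion term only helps). Hypothesis \eqref{eq:CSSync} says exactly that $a\sum_{i<j}|w_{ij}|^2-n\sum_{i<j}\epsilon_{ij}|w_{ij}|^2<0$, so there is $\eta>0$ with $\frac{dV}{dt}\le -\eta\big(\sum_{i<j}|w_{ij}|_{2,\Omega}^2+\sum_{i<j}\|z_{ij}\|_{2,\Omega}^2\big)\le -\eta' V$ for some $\eta'>0$ (since $V$ is comparable to this quadratic form, $A$ being definite positive). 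Gronwall then gives $V(t)\to 0$ exponentially, hence $\sum_{i<j}\big(|w_{ij}|_{2,\Omega}^2+\|z_{ij}\|_{2,\Omega}^2\big)\to 0$, which implies in particular $\sum_{i=1}^{n-1}|U_i-U_{i+1}|_{2,\Omega}\to 0$, i.e. identical synchronization in the sense of definition \ref{t4}.

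The main obstacle is the algebraic manipulation of the coupling term: verifying precisely that the zero row- and column-sum conditions \eqref{eq:VanRowSum} convert $\sum_{i<j}\sum_k (c_{jk}-c_{ik})u_k\cdot A w_{ij}$ into $-n\sum_{i<j}\epsilon_{ij}Aw_{ij}\cdot w_{ij}$ with the correct factor of $n$. One subtlety is that $A$ is diagonal but not scalar, so one must check the identity componentwise; since $A$ acts within $\R^s$ and the coupling acts the same way on each component of $u_k$, this works, but the bookkeeping (which pairs contribute, signs, the telescoping that yields $1/n$) is where care is needed. The rest — the differential inequality from the nonlinearity via lemma \ref{lemma:matA}, the Neumann integration by parts, and the final Gronwall argument — is routine given the $L^\infty$-bounds already established.
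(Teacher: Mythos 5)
Your overall strategy coincides with the paper's: a Lyapunov function built from the pairwise differences $w_{ij},z_{ij}$, the integral mean--value form of $F$ and $\Phi$, lemma \ref{lemma:matA} to absorb the nonlinearity into $+\int_\Omega Aw_{ij}\cdot w_{ij}-\kappa|z_{ij}|_{2,\Omega}^2$, integration by parts for the diffusion, and the zero row/column-sum property \eqref{eq:VanRowSum} to reduce the coupling contribution to $-n\sum_{i<j}\epsilon_{ij}|w_{ij}|_{2,\Omega}^2$. The paper makes your ``zero-sum bookkeeping'' explicit by splitting $G=E+L$ into symmetric and antisymmetric parts and checking that $L$ contributes nothing; your claim that the symmetric part yields exactly $-n\sum_{i<j}\epsilon_{ij}\|w_{ij}\|^2$ is correct (the paper reaches the same expression via $w_{ij}=w_{ik}+w_{kj}$, Young's inequality and the column-sum condition).

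The one step that would fail as written is your choice of $V$. Lemma \ref{lemma:matA} is stated for the pairing of the \emph{plain} vector $X_s$ with $\int_0^1 DF\,d\theta$, and it is precisely that pairing which produces the additive term $AX_s\cdot X_s$; if you put $A$ inside $V$, the orbital derivative pairs $Aw_{ij}$ with the nonlinearity and the lemma no longer applies as stated. Worse, when you arrive at the coupling term $-n\sum_{i<j}\epsilon_{ij}\int_\Omega Aw_{ij}\cdot w_{ij}$, bounding it from \emph{above} requires the lower bound $Aw\cdot w\geq a_{\min}\|w\|^2$ (smallest diagonal entry of $A$), not the upper bound $a\|w\|^2$ you invoke; the resulting threshold is $\frac{a}{n\,a_{\min}}$ rather than the $\frac{a}{n}$ of \eqref{eq:CSSync}. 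The fix is simply to take the unweighted function $V=\sum_{i,j}\bigl(|w_{ij}|^2_{2,\Omega}+|z_{ij}|^2_{2,\Omega}\bigr)$ as in the paper: then lemma \ref{lemma:matA} applies directly, the coupling identity gives $-n\sum_{i<j}\epsilon_{ij}|w_{ij}|^2_{2,\Omega}$, and the single inequality $\int_\Omega Aw_{ij}\cdot w_{ij}\leq a|w_{ij}|^2_{2,\Omega}$ produces exactly condition \eqref{eq:CSSync}. With that correction your concluding Gronwall argument (which needs a uniform gap in \eqref{eq:CSSync}, as one has in the applications of theorem \ref{th:paths}) gives synchronization in the sense of definition \ref{t4}.
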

\begin{proof}
We split $G$ into the sum of two symmetric and antisymmetric matrices, $E=(\epsilon_{ik}), i,k\in\{1,...,n\}$ and $L=(\delta_{ik}), i,k\in\{1,...,n\}$:
\begin{equation}\label{eq:sum-E-L}
G=E+L,
\end{equation}
where,

\begin{equation}\label{eq:del-ik}
\delta _{ik}=
   \displaystyle\frac{1}{2}({c_{ik}} - {c_{ki}}).\,\,\,\,\,\,\
\end{equation}

One can easily check that both matrices $E$ and $L$ have zero row sums.
As $w_{ij}=u_j-u_i$, $z_{ij}=v_j-v_i, i,j\in \{1,...,n\}$, we obtain,
\begin{equation}
\label{eq:equvLCNot}
\left\{ \begin{array}{rcl}
  w_{ijt} \!\!\!&=&\!\!\! F({u_j,v_j})-F({u_i,v_i}) + Q\Delta w_{ij} + \sum_{k=1}^n(\epsilon_{jk}w_{jk}-\epsilon_{ik}w_{ik} +\delta_{jk}w_{jk}-\delta_{ik}w_{ik}), \\
 z_{ijt} \!\!\!&=&\!\!\!  - \sigma (x)z_{ij} + \Phi (x,u_j)-\Phi (x,u_i).
 \end{array} \right.
 \end{equation}
 Besides,
\[F({u_j,v_j}) - F({u_i,v_i}) = \int_0^1 {\frac{d}{{d\theta }}F(\theta {U_j} + (1 - \theta ){U_i})d\theta }  = \left[ {\int_0^1 {DF(\theta {U_j} + (1 - \theta ){U_i})d\theta } } \right](U_j-U_i),\]
where $DF$ is the $s\times d$ Jacobian matrix of $F$,
and,
\[\Phi (x,u_j)-\Phi (x,u_i)= \int_0^1 {\frac{d}{{d\theta }}\Phi(\theta {u_j} + (1 - \theta ){u_i})d\theta }  = \left[ {\int_0^1 {D\Phi(\theta {u_j} + (1 - \theta ){u_i})d\theta } } \right]w_{ij}.\]
Hence, we can write,
\begin{equation}
\label{eq:equvLCNot-2}
\left\{ \begin{array}{rcl}
  w_{ijt} &=&  \left[ \displaystyle\int_0^1 {DF(\theta {U_j} + (1 - \theta ){U_i})d\theta } \right](U_j-U_i)+ Q\Delta w_{ij} +\\
   & &\displaystyle \sum_{k=1}^n(\epsilon_{jk}w_{jk}-\epsilon_{ik}w_{ik}+\delta_{jk}w_{jk}-\delta_{ik}w_{ik}), \\
 z_{ijt} &=&  - \sigma (x)z_{ij} + \left[ \displaystyle\int_0^1 {D\Phi(\theta {u_j} + (1 - \theta ){u_i})d\theta } \right]w_{ij}.
 \end{array} \right.
 \end{equation}
where $i,j\in \{1,...,n\}$.
Now, let us introduce the following function:
\begin{equation}
\label{eq:V}
V(t) = \sum_{i = 1}^n \sum_{j = 1}^n \left(|w_{ij}|^2_{2,\Omega}+|z_{ij}|^2_{2,\Omega}\right).
\end{equation}
In order to reach the synchronization, it is sufficient to find conditions ensuring that $V$ is a Lyapunov function with negative orbital derivative. As the graph is connected, it would be natural to include in $V$, only terms $w_{ij}$ corresponding to non-zero coefficients $\epsilon_{ij}$. However, as we will see, including all terms $w_{ij}$ in $V$ will allows to vanish sums.

Then,

\begin{eqnarray*}
 \displaystyle \frac{1}{2}\frac{d}{dt}V &=&  \displaystyle \sum_{i = 1}^n \sum_{j = 1}^n \int_\Omega   \displaystyle \left( w_{ij}.w_{ijt}+z_{ij}.z_{ijt} \right)\\\displaystyle
  &=&  \displaystyle \sum_{i = 1}^n \sum_{j = 1}^n \large{(}\int_\Omega w_{ij}.(\left[ {\int_0^1 {DF(\theta {U_j} + (1 - \theta ){U_i})d\theta } } \right]
  (U_j-U_i)+ Q\Delta w_{ij}\\ \displaystyle
 & &+\displaystyle \sum_{k=1}^n(\epsilon_{jk}w_{jk}-\epsilon_{ik}w_{ik} +\delta_{jk}w_{jk}-\delta_{ik}w_{ik}) ) \\ \displaystyle
& &+\displaystyle \int_\Omega  z_{ij}.(- \sigma (x)z_{ij} + \left[ {\int_0^1 {D\Phi(\theta {u_j} + (1 - \theta ){u_i})d\theta } } \right] w_{ij}  ) \large{)}.
\end{eqnarray*}

Now by lemma \ref{lemma:matA}, and using green formula, there exists a  definite positive  diagonal matrix $A$ such that,

\begin{equation*}
\begin{array}{rl}
  \frac{1}{2}\frac{dV}{dt}\leq&\displaystyle \sum_{i = 1}^n \sum_{j = 1}^n \left(\int_\Omega Aw_{ij}.w_{ij}+\int_\Omega w_{ij}.
  \left( \sum_{k=1}^n(\epsilon_{jk}w_{jk}-\epsilon_{ik}w_{ik})\right)\right. \\
  &\left.+\displaystyle\int_\Omega w_{ij}.\left( \sum_{k=1}^n(\delta_{jk}w_{jk}-\delta_{ik}w_{ik})\right)-\kappa|z_{ij}|^2_{2,\Omega}\right).\\
 \end{array}
\end{equation*}
The last term in the above equation vanishes, indeed:
\begin{eqnarray*}
\sum_{i,j,k = 1}^n w_{ij}.\big{(}\delta_{jk}w_{jk}-\delta_{ik}w_{ik}\big{)}=&2\sum_{i,j,k = 1}^n w_{ij}.\delta_{jk}w_{ik}\mbox{ (because } \sum_{k=1}^n\delta_{jk}=0) \\
=&2\sum_{i,j}^n \delta_{jj}||w_{ij}||^2+2\sum_{i,j=1}^n\sum_{k\neq j} \delta_{jk}w_{ij}.w_{ik}.
\end{eqnarray*}
Obviously since $\delta_{jj}=0$,
\begin{eqnarray*}
\sum_{i,j}^n \delta_{jj}||w_{ij}||^2=&0.
\end{eqnarray*}
Moreover, since $\delta_{jk}=-\delta_{kj}$, we have, for all $i \in \{1,...,n\}$,
\[\sum_{j=1}^n\sum_{k\neq j} \delta_{jk}w_{ij}.w_{ik}=0.\]

Therefore, asymmetric connectivity matrices with zero column-sums and row-sums can be treated as symmetric matrices.
Now we deal with the other terms.
We have:
\begin{eqnarray*}
\displaystyle \sum_{i=1}^n\sum_{j=1}^nw_{ij}.\sum_{k=1}^n(\epsilon_{jk}w_{jk}-\epsilon_{ik}w_{ik})&=&\sum_{i,j,k=1}^n \epsilon_{jk}w_{ij}.w_{jk}-\sum_{i,j,k=1}^n \epsilon_{ik}w_{ij}.w_{ik}\\
&=&\sum_{i,j,k=1}^n \epsilon_{jk}w_{ij}.w_{jk}-\sum_{i,j,k=1}^n\epsilon_{jk} w_{ji}.w_{jk}\\
&=&2\sum_{i,j,k=1}^n\epsilon_{jk}w_{ij}.w_{jk}\\
&=&2\sum_{i,j,k=1}^n\epsilon_{jk}(w_{ik}.w_{jk}-w_{kj}^2),
\end{eqnarray*}
since $w_{ij}=w_{ik}+w_{kj}$.
Moreover,
\begin{eqnarray*}
\sum_{i,j,k=1}^n w_{ik}.\epsilon_{jk}w_{jk}&\leq &\sum_{i,j,k=1}^n\frac{1}{2}\epsilon_{jk}(||w_{ik}||^2+||w_{jk}||^2)\\
&=&\frac{1}{2}\sum_{i,k=1}^n||w_{ik}||^2\sum_{j=1}^n\epsilon_{jk}+\frac{n}{2}\sum_{j,k=1}^n\epsilon_{jk}||w_{jk}||^2\\
&=&\frac{n}{2}\sum_{j,k=1}^n\epsilon_{jk}||w_{jk}||^2,
\end{eqnarray*}
since $\displaystyle \sum_{j=1}^n\epsilon_{jk}=0$.
Finally, we obtain:
\begin{equation*}
\begin{array}{rcl}
  \frac{1}{2}\frac{dV}{dt} &\leq &  \displaystyle \sum_{i,j = 1}^n  (-\kappa|z_{ij}|^2_{2,\Omega}+\int_\Omega Aw_{ij}.w_{ij})-n\int_\Omega \big{(} \sum_{j,k=1}^n(\epsilon_{jk}||w_{jk}||^2\big{)}\\
  &=& 2\displaystyle \displaystyle \sum_{i<j}  (-\kappa|z_{ij}|^2_{2,\Omega}+\int_\Omega Aw_{ij}.w_{ij})-2n\int_\Omega \big{(} \sum_{i<j}^n(\epsilon_{ij}||w_{ij}||^2\big{)}\\
    &\leq& -2\kappa\displaystyle  \sum_{i<j}  |z_{ij}|^2_{2,\Omega}+2a\sum_{i<j} |w_{ij}|_{2,\Omega}^2-2n \sum_{i<j}^n\epsilon_{ij}|w_{ij}|_{2,\Omega}^2\\
    &<&0,
\end{array}
\end{equation*}
if $(w_{ij},z_{ij})\neq 0$ thanks to hypothesis \eqref{eq:CSSync}.
\end{proof}

 Now we would like to prove theorem \ref{th:paths} which gives a sufficient condition for synchronization which can be applied for general networks.  An apparent difficulty comes from the fact that in the left-hand side of \eqref{eq:CSSync}, all the variables appear while in the right-hand side, because of the parameters $\epsilon_{ij}$, only variables corresponding to edges appear. The following lemma allows to obtain a bound with only terms corresponding to edges.

\begin{lemma}
\label{lem-MajNormeWij}
For all $i,j \in \{1,...,n\}$, and for all sequence, $(i_l)_{l\in\{0,...,k\}}$,  with $i_0=i,...,i_k=j$, we have:
\begin{equation}
\label{eq:MajNormeWij}
||w_{ij}||^2\leq k(\sum_{l=0}^{k-1}||w_{i_li_{l+1}}||^2).
\end{equation}
\end{lemma}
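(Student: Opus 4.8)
The statement is a telescoping-plus-Cauchy-Schwarz inequality about the vectors $w_{ij} = u_j - u_i$ in $\mathbb{R}^s$ (with the Euclidean norm $\|\cdot\|$). The key algebraic fact is that along any path $i = i_0, i_1, \dots, i_k = j$ one has the telescoping identity
\[
w_{ij} = u_j - u_i = \sum_{l=0}^{k-1} (u_{i_{l+1}} - u_{i_l}) = \sum_{l=0}^{k-1} w_{i_l i_{l+1}},
\]
which follows immediately from the definition $w_{ab} = u_b - u_a$. So the plan is: first establish this telescoping identity, then take norms and apply the triangle inequality together with the Cauchy--Schwarz (or power-mean) inequality $\big\|\sum_{l=0}^{k-1} x_l\big\|^2 \le k \sum_{l=0}^{k-1} \|x_l\|^2$, valid for any $k$ vectors $x_0,\dots,x_{k-1}$ in an inner-product space.

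\textbf{Key steps, in order.} (1) Write $w_{ij} = \sum_{l=0}^{k-1} w_{i_l i_{l+1}}$ by telescoping the definition of $w$. (2) Recall the elementary inequality: for vectors $x_0,\dots,x_{k-1}$,
\[
\Big\| \sum_{l=0}^{k-1} x_l \Big\|^2 \le k \sum_{l=0}^{k-1} \|x_l\|^2,
\]
which is just Cauchy--Schwarz applied to the pairing of $(1,1,\dots,1)$ with $(\|x_0\|,\dots,\|x_{k-1}\|)$ after using the triangle inequality, or equivalently convexity of $t \mapsto t^2$. (3) Apply this with $x_l = w_{i_l i_{l+1}}$ to obtain
\[
\|w_{ij}\|^2 = \Big\| \sum_{l=0}^{k-1} w_{i_l i_{l+1}} \Big\|^2 \le k \sum_{l=0}^{k-1} \|w_{i_l i_{l+1}}\|^2,
\]
which is exactly \eqref{eq:MajNormeWij}.

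\textbf{Main obstacle.} Frankly, there is no serious obstacle: the lemma is a one-line consequence of telescoping and Cauchy--Schwarz, and the content lies entirely in how it is \emph{used} afterwards (choosing, for each edge of the graph, a collection of shortest paths through that edge so as to bound the left-hand side of \eqref{eq:CSSync} by the right-hand side, as announced before Lemma~\ref{lem-MajNormeWij}). The only point requiring the tiniest bit of care is making sure the factor is exactly $k$ (the number of edges in the path) and not $k+1$ or $k-1$: the sum $\sum_{l=0}^{k-1}$ has $k$ terms, so Cauchy--Schwarz produces the factor $k$, matching the statement. One should also note the estimate is sharp (equality when all $w_{i_l i_{l+1}}$ are equal), which is why minimizing $k$, i.e. taking shortest paths, will matter in the application.
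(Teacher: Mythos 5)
Your proof is correct and follows essentially the same route as the paper: telescoping $w_{ij}=\sum_{l=0}^{k-1}w_{i_li_{l+1}}$ and then the elementary bound $\|\sum_{l=0}^{k-1}x_l\|^2\leq k\sum_{l=0}^{k-1}\|x_l\|^2$, which the paper obtains by expanding the square and applying Young's inequality to the cross terms rather than invoking Cauchy--Schwarz directly. The two derivations of that elementary inequality are interchangeable, so there is nothing substantive to distinguish your argument from the paper's.
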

\begin{proof}
We write:
\begin{eqnarray*}
||w_{ij}||^2&=&||\sum_{l=0}^{k-1} w_{i_li_{l+1}}||^2\\
&=&(\sum_{l=0}^{k-1} w_{i_li_{l+1}}).(\sum_{l=0}^{k-1} w_{i_li_{l+1}})\\
&=&\sum_{l=0}^{k-1}||w_{i_li_{l+1}}^2||+2\sum_{l<m}w_{i_li_{l+1}}w_{i_mi_{m+1}}\\
& \leq &k\sum_{l=0}^{k-1} ||w_{i_{l}i_{l+1}}||^2 \quad \mbox{         by Young inequality}.
\end{eqnarray*}
\end{proof}

Note that the lemma is valid for an arbitrary sequence, but we will use it for sequences corresponding to edges in the graph.

Without taking care of the  edges direction in the graph, for each $(i,j)$, $i<j$,
we  choose a unique  path of minimal length in the graph joining nodes $i$ and $j$.
We denote this path $P_{ij}$, $l(P_{ij})$ its length, and its nodes by: $(i_l)_{l\in\{0,...,length(P_{ij})\}}$,  with $i_0=i,...,i_{length(P_{ij})}=j$.
For each $(k,l)$, $k<l$ corresponding to an edge in the graph (i.e. $\epsilon_{kl}\neq 0$),
we define $\alpha_{kl}$ as the sum of all the lengths of  the minimal paths passing trough the edge $(k,l)$. If $(k,l)$ is not an edge of the graph, we set $\alpha_{kl}=0$:
\begin{equation}
\alpha_{kl}=\left\{
\begin{array}{l}
\sum_{i<j, (k,l)\in P_{ij}}length(P_{ij}) \mbox{ if } (k,l) \mbox{ is an edge of the graph, }\\
0\mbox{ if } (k,l) \mbox{ is not an edge of the graph.}
\end{array}
\right.
\end{equation}
Figure \ref{FigExpleCalculakl} gives an example of the computation of these coefficients.
\begin{theorem}\label{th:paths}
Let us assume that for each edge $(k,l)$, we have,
\[\frac{a}{n}\alpha_{kl}<\epsilon_{kl},\]
then system \eqref{eq:equvLC} synchronizes in the sense of definition \ref{t4}.
\end{theorem}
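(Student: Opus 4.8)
The plan is to deduce Theorem~\ref{th:paths} from Theorem~\ref{CSSync} by estimating the left‑hand side of \eqref{eq:CSSync}, which contains \emph{all} pairs $(i,j)$, by a sum over edges only. First I would apply Lemma~\ref{lem-MajNormeWij} along the chosen minimal path $P_{ij}$: for each $i<j$, whose consecutive nodes are $i_0=i,\dots,i_{l(P_{ij})}=j$, the pointwise inequality \eqref{eq:MajNormeWij} reads $\|w_{ij}\|^2\le l(P_{ij})\sum_{m=0}^{l(P_{ij})-1}\|w_{i_mi_{m+1}}\|^2$; integrating over $\Omega$ gives
\[
|w_{ij}|_{2,\Omega}^2\ \le\ l(P_{ij})\sum_{(k,l)\in P_{ij}}|w_{kl}|_{2,\Omega}^2 .
\]

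Next I would sum this over all pairs $i<j$ and exchange the order of summation, grouping the result by edge. A fixed term $|w_{kl}|_{2,\Omega}^2$, with $(k,l)$ an edge, appears with weight $l(P_{ij})$ for precisely those pairs $i<j$ whose minimal path passes through $(k,l)$, hence with total weight $\sum_{i<j,\ (k,l)\in P_{ij}}l(P_{ij})=\alpha_{kl}$. This yields
\[
\sum_{i<j}|w_{ij}|_{2,\Omega}^2\ \le\ \sum_{(k,l)\text{ edge}}\alpha_{kl}\,|w_{kl}|_{2,\Omega}^2 .
\]
Multiplying by $a/n$ and invoking the hypothesis $\tfrac{a}{n}\alpha_{kl}<\epsilon_{kl}$ on every edge, while using the convention $\alpha_{kl}=0=\epsilon_{kl}$ for non‑edges, I obtain
\[
\frac{a}{n}\sum_{i<j}|w_{ij}|_{2,\Omega}^2\ \le\ \frac{a}{n}\sum_{(k,l)}\alpha_{kl}|w_{kl}|_{2,\Omega}^2\ \le\ \sum_{(k,l)}\epsilon_{kl}|w_{kl}|_{2,\Omega}^2\ =\ \sum_{i<j}\epsilon_{ij}|w_{ij}|_{2,\Omega}^2 ,
\]
with strict inequality as soon as $w_{kl}\neq 0$ for some edge $(k,l)$. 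If, on the other hand, $w_{kl}=0$ for every edge, then connectedness of the graph forces $w_{ij}=0$ for all $i,j$ (each $w_{ij}$ is a telescoping sum of edge differences along a path), so the network is already synchronized in the sense of Definition~\ref{t4}. In both cases condition \eqref{eq:CSSync} of Theorem~\ref{CSSync} is satisfied, and synchronization follows.

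The only slightly delicate points are the bookkeeping in the double sum—making the identification $\sum_{i<j,(k,l)\in P_{ij}}l(P_{ij})=\alpha_{kl}$ precise—and the strictness issue in \eqref{eq:CSSync}, which is why the degenerate case of all edge‑differences vanishing has to be dispatched separately; the ordered‑pair versus unordered‑edge conventions are harmless since only $\|w_{kl}\|^2=\|w_{lk}\|^2$ enters. I do not expect a genuine obstacle: the substantive work has already been carried out in Theorem~\ref{CSSync} and Lemma~\ref{lem-MajNormeWij}, and this step is essentially a combinatorial reduction.
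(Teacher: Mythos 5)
Your proposal is correct and follows essentially the same route as the paper: apply Lemma~\ref{lem-MajNormeWij} along the chosen minimal paths, regroup the double sum by edges to produce the coefficients $\alpha_{kl}$, and conclude via Theorem~\ref{CSSync}. Your explicit treatment of the degenerate case where all edge differences vanish (needed for the strict inequality in \eqref{eq:CSSync}) is a small point of care that the paper leaves implicit, but it does not change the argument.
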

\begin{proof}
Now, the proof of theorem \ref{th:paths} follows from theorem \ref{CSSync} and lemma \ref{lem-MajNormeWij}.
We have,
\begin{eqnarray*}
\displaystyle \sum_{i<j}||w_{ij}||^2&\leq &\sum_{i<j}length(P_{ij})\sum_{l=0}^{length(P_{ij})-1}||w_{i_{l}i_{l+1}}||^2 \mbox{ thanks to  lemma  }\ref{lem-MajNormeWij}\\
 &=& \sum_{k<l}\alpha_{kl}||w_{kl}||^2 \mbox{ by reordering the terms of the sum along the edges of the graph.}
\end{eqnarray*}

Now the result follows from theorem \ref{CSSync}. Such a result has been found in \cite{B4, B5} for ODE systems.
\end{proof}

\begin{figure}[h]
\begin{center}

   \includegraphics[scale=0.15]{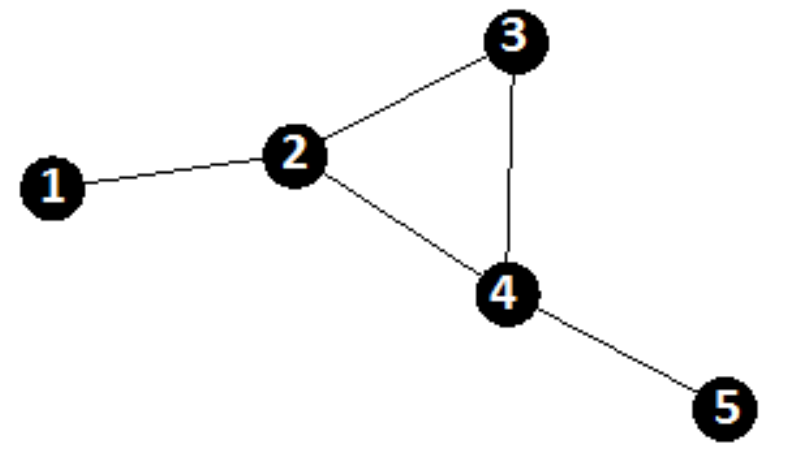}
   \begin{quotation}
   \caption{Example of calculus of the number $\alpha_{kl}$. The minimal path joining $1$ and $2$ is $P_{12}=$ $1-2$,
   $P_{13}=1-2-3$, $P_{14}=1-2-4$, $P_{15}=1-2-4-5$, $P_{23}=2-3$, $P_{24}=2-4$,$P_{25}=2-4-5$, $P_{34}=3-4$, $P_{35}=3-4-5$, $P_{45}=4-5$.
   It follows that $\alpha_{23}=2+1=3$.    }
   \end{quotation}
   \label{FigExpleCalculakl}
   \end{center}
\end{figure}

\begin{corollary}[Fully connected network]
We assume that $\forall i,j\in\{1,...,n\}$, $i\neq j$ $\epsilon_{ij}>\frac{a}{n}$, then  \eqref{eq:equvLC} synchronizes in the sense of definition \ref{t4}.
\end{corollary}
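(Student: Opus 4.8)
The plan is to read this off as a direct specialization of Theorem \ref{th:paths} (a specialization of Theorem \ref{CSSync} works equally well). The only thing to check is what the path coefficients $\alpha_{kl}$ become when the underlying graph is complete. Since $\epsilon_{ij}>\frac{a}{n}>0$ for every pair $i\neq j$, every pair of nodes is joined by an edge, so for each $i<j$ the unique minimal path $P_{ij}$ is the single edge $\{i,j\}$ itself, of length $l(P_{ij})=1$. Consequently, for a fixed edge $(k,l)$ the only minimal path that passes through it is $P_{kl}$, and therefore
\[
\alpha_{kl}=\sum_{i<j,\ (k,l)\in P_{ij}} l(P_{ij}) = l(P_{kl}) = 1 .
\]

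With $\alpha_{kl}=1$ the hypothesis of Theorem \ref{th:paths}, namely $\frac{a}{n}\alpha_{kl}<\epsilon_{kl}$ for every edge $(k,l)$, reduces exactly to $\frac{a}{n}<\epsilon_{kl}$ for all $k<l$, which is the assumption of the corollary. Theorem \ref{th:paths} then gives that \eqref{eq:equvLC} synchronizes in the sense of Definition \ref{t4}, which is the claim.

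Alternatively, one can bypass Theorem \ref{th:paths} and invoke Theorem \ref{CSSync} directly: for each $i<j$ we have $\epsilon_{ij}|w_{ij}|_{2,\Omega}^2 \geq \frac{a}{n}|w_{ij}|_{2,\Omega}^2$, with strict inequality whenever $w_{ij}\neq 0$; summing over $i<j$ yields \eqref{eq:CSSync} as soon as some $w_{ij}\neq 0$ (if all $w_{ij}$ vanish the system is already synchronized), and Theorem \ref{CSSync} concludes.

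There is essentially no obstacle here — the statement is a routine corollary — and the only point requiring a line of care is the determination of $\alpha_{kl}$ in the complete graph (equivalently, handling the degenerate all-$w_{ij}$-zero case in the second approach). I would present the argument via Theorem \ref{th:paths} as above, since it makes the structural reason transparent and fits the pattern used for the ring network in the sequel.
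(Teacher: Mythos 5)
Your proof is correct and follows the paper's own route exactly: the paper likewise deduces the corollary from Theorem \ref{th:paths} by observing that $\alpha_{ij}=1$ for the complete graph, and also remarks that Theorem \ref{CSSync} applies directly. Your version merely spells out these same steps in more detail.
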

\begin{proof}
This result comes obviously from theorem \ref{th:paths}, since for fully connected network, $\alpha_{ij}=1$. Note that in the particular case of fully connected network, we could also conclude from theorem \ref{CSSync}.
\end{proof}
\begin{corollary}[Unidirectionally ring network]
We assume that the connectivity matrix $G=(c_{ij}),1\leq i,j\leq n,$ is given by  $c_{ii}=-c<0$  $ \forall i \in \{1,...,n\}$, $c_{ii+1}=c_{n1}=c$  $\forall i \in \{1,...,n-1\}$, and $c_{ij}=0$ otherwise. Then, if we assume that
\begin{equation*}
c>\left\{
\begin{array}{rcl}
\frac{a}{12}(n^2-1)& \mbox{ if }& n \mbox{ odd }\\
\frac{a}{12}(n^2+2)& \mbox{ if }& n \mbox{ and } \frac{n}{2} \mbox{ even }  \\
\frac{a}{12}(n^2+8)& \mbox{ if }& n \mbox{ even } \mbox{ and } \frac{n}{2}   \mbox{ odd }\\
\end{array}
\right.
\end{equation*}
 the network \eqref{eq:equvLC} synchronizes in the sense of definition \ref{t4}.
\end{corollary}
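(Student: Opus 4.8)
The plan is to read the corollary straight off Theorem \ref{th:paths}, so the whole task is to evaluate the weights $\alpha_{kl}$ for the cycle graph underlying the unidirectional ring. First I would record the structural facts. Here $c_{i,i+1}=c_{n1}=c$ while the reversed entries vanish, so $\epsilon_{ij}=\frac{c_{ij}+c_{ji}}{2}$ equals $\frac c2$ on the $n$ edges of the cycle $C_n$ and $0$ otherwise; hence the ``edges of the graph'' of Theorem \ref{th:paths} are exactly these $n$ cycle edges, and its sufficient condition becomes $\frac an\alpha_{kl}<\frac c2$ for each edge, that is
\[
c>\frac{2a}{n}\max_{(k,l)}\alpha_{kl}.
\]
Everything thus reduces to computing $\max_{(k,l)}\alpha_{kl}$ on $C_n$.

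For each pair $i<j$ fix a shortest path $P_{ij}$ in $C_n$; its length equals the cycle distance $d(i,j)=\min(|i-j|,n-|i-j|)$, it is unique when $d(i,j)<n/2$, and for $d(i,j)=n/2$ (only if $n$ is even) there are exactly two shortest paths, the two half-circles, whose union is all of $C_n$. The key identity, obtained by summing the definition of $\alpha_{kl}$ over the $n$ edges and exchanging summations — each $P_{ij}$ contributes its length to each of its $\mathrm{length}(P_{ij})$ edges — is
\[
\sum_{(k,l)\ \mathrm{edge}}\alpha_{kl}=\sum_{i<j}\mathrm{length}(P_{ij})^2=\sum_{i<j}d(i,j)^2.
\]
When $n$ is odd all geodesics are unique and the configuration is invariant under the cyclic rotation, so, $C_n$ being edge-transitive, every $\alpha_{kl}$ is the same; with the elementary power sum $\sum_{i<j}d(i,j)^2=\frac{n^2(n^2-1)}{24}$ (each vertex contributes $2\sum_{k=1}^{(n-1)/2}k^2$ to the ordered sum) this gives $\alpha_{kl}=\frac{n(n^2-1)}{24}$, and the displayed inequality becomes exactly $c>\frac{a}{12}(n^2-1)$.

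For $n=2m$ even I would separate the pairs with $d(i,j)<m$ from the $m$ antipodal pairs. The former still have unique geodesics and, the family being rotation-invariant, contribute the same amount to every edge, namely $\beta:=\frac1n\sum_{d<m}d^2=\frac{m(m-1)(2m-1)}{6}$ (here $\sum_{i<j}d^2=\frac{m^2(m-1)(2m-1)}{3}+m^3$ for $n=2m$, and one subtracts the antipodal contribution $m\cdot m^2$). For the antipodal pairs there is a genuine choice, and since the two geodesics of such a pair cover complementary half-circles, the neatest route is to average the inequality of Lemma \ref{lem-MajNormeWij} over both of them — a routine variant of the proof of Theorem \ref{th:paths} in which each antipodal pair splits its weight equally between its two geodesics. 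Then every antipodal pair adds exactly $\frac m2$ to the (effective) weight of every edge, so every edge carries total weight $\beta+\frac{m^2}{2}=\frac{m(2m^2+1)}{6}=\frac{n(n^2+2)}{24}$, and synchronization holds whenever $c>\frac{a}{12}(n^2+2)$. Since $\frac{a}{12}(n^2+2)\le\frac{a}{12}(n^2+8)$, this single estimate already covers both even sub-cases of the statement. (If instead one assigns a single geodesic to each antipodal pair, one balances the edge loads as well as a $\pm1$-walk around $C_n$ permits: the maximal attainable load is $\frac{m+1}{2}$ when $m$ is odd, which gives back precisely the constant $n^2+8$, whereas for $m$ even the single-path bound is weaker and the averaging above is what produces $n^2+2$.)

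The step I expect to be the real work is exactly this last piece of bookkeeping for the antipodal pairs: producing an admissible distribution of geodesics (or of weights over pairs of geodesics) whose largest edge load can be written in closed form, and checking that the parity of $m=n/2$ is all that matters. The rest — the reduction to $\max_{kl}\alpha_{kl}$, the identity $\sum_{kl}\alpha_{kl}=\sum_{i<j}d(i,j)^2$, and the two sums $\sum k^2$ — is mechanical, and the odd case is immediate.
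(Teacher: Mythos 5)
Your proposal is correct and rests on the same skeleton as the paper's proof: reduce the corollary to Theorem \ref{th:paths}, note that $\epsilon_{kl}=c/2$ on the $n$ cycle edges, and compute $\max_{kl}\alpha_{kl}$; all of your closed forms ($\frac{n(n^2-1)}{24}$ for $n$ odd, $\frac{n(n^2+2)}{24}$ and $\frac{n(n^2+8)}{24}$ in the even subcases) agree with the paper's. Where you genuinely diverge is in the bookkeeping. The paper evaluates $\alpha_{l,l+1}$ by directly enumerating the minimal paths through a fixed edge, obtaining $\sum_{i=1}^k\sum_{j=i}^k j$ plus an explicit correction for the antipodal pairs; you instead use the double-counting identity $\sum_{e}\alpha_e=\sum_{i<j}\mathrm{length}(P_{ij})^2$ together with rotation-invariance of the (unique) geodesics, which is cleaner and less error-prone for the odd case and for the non-antipodal part of the even case. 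For the antipodal pairs your averaging over the two half-circle geodesics is a legitimate refinement: applying Lemma \ref{lem-MajNormeWij} to each of the two geodesics and taking the mean of the two inequalities before summing into the hypothesis of Theorem \ref{CSSync} is indeed a routine extension, and it shows that $c>\frac{a}{12}(n^2+2)$ suffices for \emph{all} even $n$, which is strictly sharper than the stated $\frac{a}{12}(n^2+8)$ when $n/2$ is odd (and a fortiori proves the corollary as stated). Your parenthetical single-geodesic analysis, with maximal edge load $\frac{m+1}{2}$ forced by integrality when $m=n/2$ is odd, recovers exactly the paper's constant, so you have also accounted for why the paper's statement has the extra subcase. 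The only thing I would ask you to write out, if this were to stand alone, is the averaged version of Theorem \ref{th:paths} (fractional weights on geodesics), since the theorem as stated in the paper fixes a single path $P_{ij}$ per pair; but as you note, the modification is mechanical.
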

\begin{proof}
We start with the case of $n$ odd. For each couple of nodes of the graph, there is a unique path of minimal length joining the nodes.
If we suppose that $n=2k+1$, then for each node indexed by $l$:
\begin{equation*}
\begin{array}{rcl}
\alpha_{ll+1}&=&(1+...+k)+(2+....+k)+...+(k-1+k)+k\\
&=&\sum_{i=1}^k\sum_{j=i}^kj\\
&=&\frac{(n-1)(n+1)n}{24}.
\end{array}
\end{equation*}
The figure \ref{fig:ExpleCalculaklRing}-a gives an example of such a network.
In the case where of $n$ even, we assume that $n=2k$.  For each couple of nodes $(i,j)$ in the graph, if the distance between $i$ and $j$ is less than $k$,
there exists  a unique path of minimal length joining the nodes. But if the distance between $i$ and $j$ is equal to $k$, there exists two distinct paths of minimal length joining the nodes.
Therefore, we can choose for each couple $(i,j)$ of distance $k$, alternatively the minimal path trough the left and trough the right (i.e. for example, for node 1, the minimal path of length $k$ trough the left). Then we find, if $\frac{n}{2}$ is even,
\begin{equation*}
\begin{array}{rcl}
\alpha_{ll+1}&=&(1+...+k)+(2+....+k-1)+...+(k-1+k)+k-1\\
&=&\sum_{i=1}^k\sum_{j=i}^kj-\frac{k}{2}k\\
&=&\frac{n(n^2+2)}{24}.
\end{array}
\end{equation*}
Figure \ref{fig:ExpleCalculaklRing}-b gives an example of such a network. If $\frac{n}{2}$ is odd and in the worst case,
\begin{equation*}
\begin{array}{rcl}
\alpha_{ll+1}&=&(1+...+k)+(2+....+k-1)+(3+....+k)+...+(k-1)+k\\
&=&\sum_{i=1}^k\sum_{j=i}^kj-\frac{k-1}{2}k\\
&=&\frac{n(n^2+8)}{24}.
\end{array}
\end{equation*}
\end{proof}

\begin{figure}[h]
\begin{center}
\includegraphics[scale=0.21]{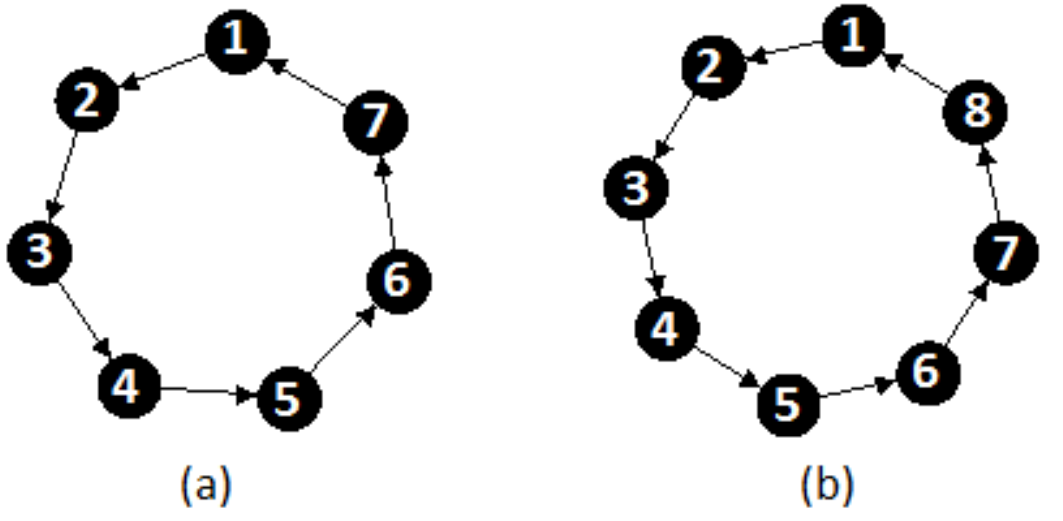}

\end{center}
\caption{Unidirectionally ring. In panel a), the graph has an odd number of nodes, $n=7$. There exists a unique minimal path  joining each couple of nodes in the graph. For example, the computation of $\alpha_{12}$ is given
by $\alpha_{12}=(1+2+3)+(2+3)+(3)$. The order of the computation follows from the counting of all the lengths of the minimal paths passing trough the edge $(1,2)$, starting at $1$,$7$ and
 $6$. In panel b), the graph has an even number of nodes, $n=8$. If the distance between two nodes is equal to $\frac{n}{2}=4$, there exist two distinct minimal paths joining these
 nodes. For example, we can link node $1$ and node $5$ either by the path $1-2-3-4-5$ or by $1-8-7-6-5$. Therefore, we choose the path $1-2-3-4-5$ to connect  nodes $1$ and $5$, whereas we choose the path
 $2-1-8-7-6$ to connect $2$ and $6$, and so on. Then computation of $\alpha_{12}$ is given
by $\alpha_{12}=(1+2+3+4)+(2+3)+(3+4)$.}
\label{fig:ExpleCalculaklRing}
\end{figure}


\section{Numerical simulations}
In this section, we consider networks of type \eqref{eq:equvLC}   with:
\begin{align*}
d=2, s=1,\\
F(u,v)=\frac{1}{\epsilon}(-u^3+3u-v), Q=\frac{d_u}{\epsilon}\\
\sigma(x)=b,  \quad \phi(x,u)=au+c,
\end{align*}
i.e. each node is represented by the following  reaction-diffusion system of FitzHugh-Nagumo type,
\begin{equation}\label{61}
\left\{ \begin{array}{l}
 \epsilon {u_t} = d_u\Delta u-u^3+3u - v \,\,\,\,\,\,\text{on}\,\,\,\,\Omega  \times \textbf{R}^+ \\
 {v_t} = au - bv + c(x) \,\,\,\,\,\,\,\,\,\,\,\,\,\,\,\,\,\,\,\,\,\,\,\text{on}\,\,\,\,\Omega  \times {\textbf{R}^ + } \\
 \end{array} \right.
 \end{equation}
 where $u=u(x,t), v=v(x,t)$, $d_u, a, b>0$, $\Omega \subset \textbf{R}^N$ is a regular  bounded  open set, and with Neumann zero flux conditions on the boundary.

We perform numerical simulations in two cases, fully connected networks in one hand and unidirectionally coupled ring networks in the other hand, both with linear coupling. As far as we know, only few results exists for networks of reaction-diffusion systems, the case of a chain network has been partially investigated in \cite{Ambrosio1}.   We use the following parameters values:
\begin{equation}
\label{eq:parameter-values}
a=1, b=0.001,  \epsilon=0.1, d_u=0.05.
\end{equation}
 The numerical integration of the network is realized using a $C++$ program, on  $\Omega\times [0,T] =[0,100]\times[0,100]\times[0,3000]$.
 The main concern of this section are:
 \begin{itemize}
 \item heuristic laws of strength coupling with respect to the number of nodes in the graph
 \item influence of space heterogeneity in initial conditions with regard to the synchronization phenomenon.
 \end{itemize}

Our main conclusion is that heterogeneity in initial conditions does not  affect the general law of synchronization with respect to the number of nodes in the graph. As for ODE's, and in good agreement with our theoretical results, the threshold value for synchronization is given by a $\frac{1}{n}$ law in case of a fully connected network, see figure \ref{FullyCN-Ho-Spi-loi-1-n}  and a $n^2$ law in case of a ring network, see figure \ref{fig:loi-ringuniform} . However, it appears that the space heterogeneity of initial conditions  will increase the threshold value, see figure \ref{ev-FC-PLU-RICunif}. Besides, the persistence of patterns require some symmetry. This kind of question have been mentioned for example in \cite{Ambrosio1,Golubitsky}, and the persistence of patterns, with respect to the size of $\Omega$ and the diffusion coefficient  have been studied in \cite{Ambrosio1, Smoller3}. If there is not a certain kind of symmetry the network evolves towards homogeneous solutions whereas heterogeneous patterns  persist under some symmetry conditions on initial conditions. In the case of one node, this question is theoretically  treated in a forthcoming paper \cite{Ambrosio3}. Indeed, in figure \ref{fig:Nosymmetry-noPatterns}, we show how the increase of coupling strength for non symmetric initial conditions, make the patterns disappear.  Most of our simulations are done with $c(x)=0$ which correspond to the periodic regime of the FHN system. However,  we also perform some numerical simulations with $c(x)$ depending on $x$.  This allows us to take advantage of  both excitability and oscillatory regime of the FHN system. The emerging laws are the same in this case.

We split this section in two subsections corresponding to the fully and unidirectional  connected ring networks.
\subsection{Fully connected network}
For the fully connected network, the system reads as:
\begin{equation}\label{6a6}
\left\{ \begin{gathered}
  \epsilon u_{it}=-u_i^3+3u_i-v_i+d_u\Delta u_i-g_n\sum\limits_{j = 1,j\neq i}^n(u_i-u_j)\hfill \\
    v_{it}=au_i-bv_i+c(x)\hfill \\
\end{gathered}  \right. \,\,\,\,\,\,\,\,\,\,\, i=\overline{1,n},
\end{equation}
where $g_n$ is the constant coupling strength between each couple of nodes in the graph.
We present here five figures.
\begin{itemize}
\item The figure \ref{toibuon1} represents the evolution of the network, for $n=3$, when we choose two initial conditions with spirals and one homogeneous.
The simulations show that we obtain synchronization for a threshold value of  $g_3\simeq 0.015$.
They also show that we obtain asymptotically three spirals patterns.
\item In figure  \ref{FullyCN-Ho-Spi-loi-1-n}, we represent the evolution of the coupling strength value $g_n$  needed for synchronization
with respect to the number of nodes in the graph.
The number of nodes is varied from $3$ to $20$, and we obtain an heuristic law in $\frac{1}{n}$.
This is in good accordance with our theoretical results and is similar with previous results for ODE's networks.
We have also obtained the same heuristic laws when we choose only spirals for initial conditions or only uniform laws on $[-1,1]$ or even with $c(x)$ non-constant function.
Thus the spatial effects induced by initial conditions do not affect this heuristic law already found on ODE's fully linearly coupled networks.
\item In figure \ref{fig:Nosymmetry-noPatterns}, we show a possible effect of increasing the coupling strength.
For some values of the coupling strength, if the initial conditions are near a symmetric configuration a pattern will persist otherwise they asymptotically evolve to a space
homogeneous periodic solution, see \cite{Ambrosio3} and the references therein for a discussion in the case of a single reaction-diffusion system of FHN type.
Here in the case of the fully connected network of FHN systems, for a coupling strength $g_3=0.5$, and with to spirals and one homogeneous for initial conditions,
we observe that the network evolves toward a space homogeneous synchronized state: the system  synchronizes but there is no more patterns.
\item The figure \ref{toibuon8} is obtained in an analogous way as the figure \ref{toibuon1} but with different initial conditions.
Here, for each $x\in \Omega$, we choose $u_i(x,0)$ and $v_i(x,0)$, $i\in \{1,2,3\}$ as  random uniform values in $[-1,1]$.
The simulations show that we obtain synchronization for a threshold value of  $g_3\simeq 0.014$.
They also show that we obtain asymptotically three similar patterns.
\item In figure  \ref{FullyCN-Uniform-loi-1-n}, as in figure \ref{FullyCN-Ho-Spi-loi-1-n}, we represent the evolution of the coupling strength value $g_n$  needed for synchronization with respect to the number of nodes in the graph.
The number of nodes is varied from $3$ to $20$, and we obtain an heuristic law in $\frac{1}{n}$. Thus the spatial effects induced by initial conditions do not affect this heuristic law.
\item In figure \ref{ev-FC-PLU-RICunif}, we show the evolution of the coupling strength value $g_{20}$   needed for synchronization with respect  to the ratio between random uniform
initial conditions and space homogeneous initial conditions in the fully connected network. The figure shows that increasing the space dispersion in initial conditions increase the coupling strength  threshold.
\item Finally, in \ref{figlong1new1}, we show the synchronization of a fully linearly connected network of type \eqref{6a6} with $n=3$, and $c(x)$ a non constant function: $c(x)=0$
if $x$ is in a small neighborhood of $(0,0)$, and $c(x)=-1.1$ overwise.
For one subsystem, and because of the oscillatory-excitable of the FHN system, this induces generation of periodic pulses starting at $(0,0)$.
For initial conditions, we choose three distinct space-homogeneous values. We observe that the synchronization occurs
for $g_3\geq 0.013.$.
Asymptotically, the three subsystems evolve with spiral patterns.

\end{itemize}

\begin{figure}[H]
\begin{flushleft}
\includegraphics[scale=0.5]{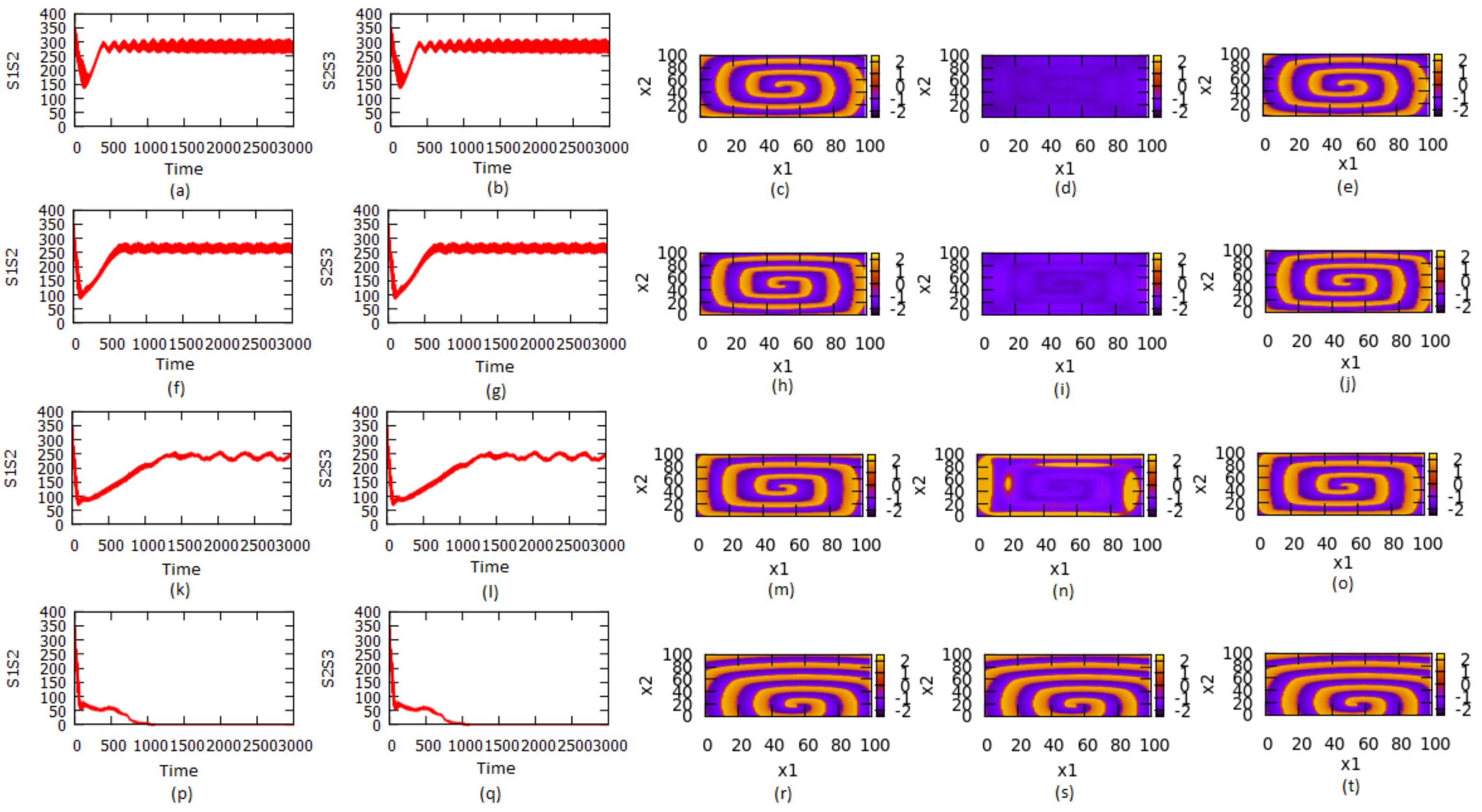}
\caption{Synchronization of a fully linearly connected network of type \eqref{6a6} with $n=3$, $c(x)=0$.
For initial conditions, we choose two spirals and one homogeneous in space. We observe that the synchronization occurs
for $g_3\geq 0.015.$. Indeed, each row, from up to down, correspond respectively to the following values for $g_3$: $0.005, 0.01, 0.013, 0.015$.
The two first columns represent the synchronization error respectively between $u_1$ and $u_2$ and $u_2$ and $u_3$.
The three others columns represent respectively from left to right the isovalues of $u_1$, $u_2$ and $u_3$  for all $x=(x_1,x_2)\in \Omega$ at time $3000$.
Asymptotically, the three subsystems evolve with spiral patterns.}
\label{toibuon1}
\end{flushleft}
\end{figure}

\begin{figure}[H]
\begin{center}
\includegraphics[scale=0.7]{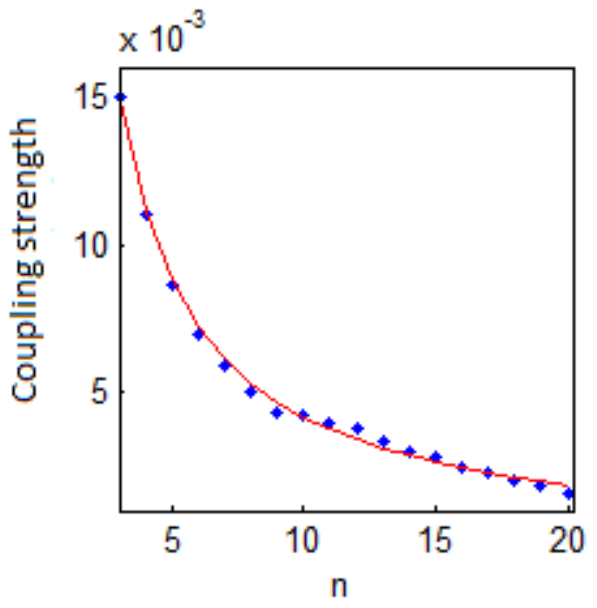}
\begin{quotation}
\caption{ Evolution of the coupling strength value $g_n$  needed for synchronization with the respect to the number of nodes, $n$ in the graph for a fully
connected network of type \eqref{6a6} with $c(x)=0$, when $n$ is varied from $3$ to $20$.
For initial conditions, we choose  approximately $50\%$ of spirals and $50\%$ of homogeneous. The blue points represent the values obtained numerically, the red curve represents the function $\displaystyle g_n=\displaystyle\frac{0.046}{n}-0.00043$. Thus, we obtain heuristically a $\frac{1}{n}$  law which is common in other areas and highlights the synchronization emergent property, see for example \cite{C1, C1p, C2}.}
\label{FullyCN-Ho-Spi-loi-1-n}
\end{quotation}
\end{center}
\end{figure}

\begin{figure}[H]
\begin{center}
\includegraphics[scale=0.5]{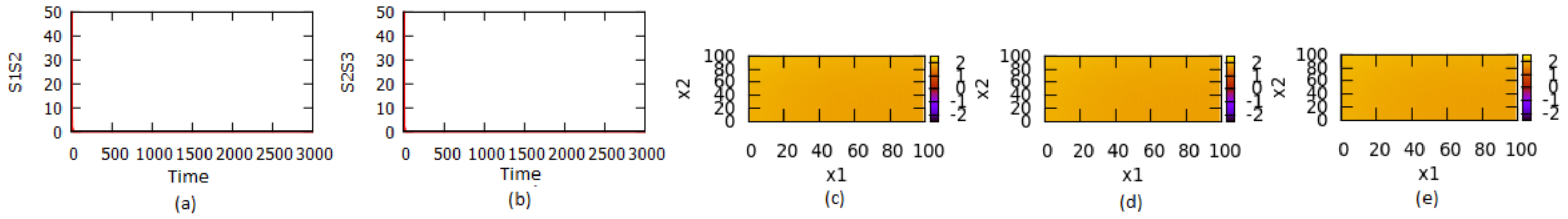}
\begin{quotation}
\caption{Synchronization of a fully linearly connected network of type \eqref{6a6} with $n=3$, $c(x)=0$ and $g_3=0.5$.
For initial conditions, we choose two spirals and one homogeneous in space.
This shows an effect of the increasing of the coupling strength. Indeed, the simulation is the same as in figure \ref{toibuon1} with a coupling strength equal to $0.5$.
We observe that the spiral patterns disappear, and the network evolve to space homogeneous solutions. This comes from a lack of symmetry  in initial solutions}
\label{fig:Nosymmetry-noPatterns}
\end{quotation}

\end{center}
\end{figure}

\begin{figure}[H]
\begin{center}
\includegraphics[scale=0.5]{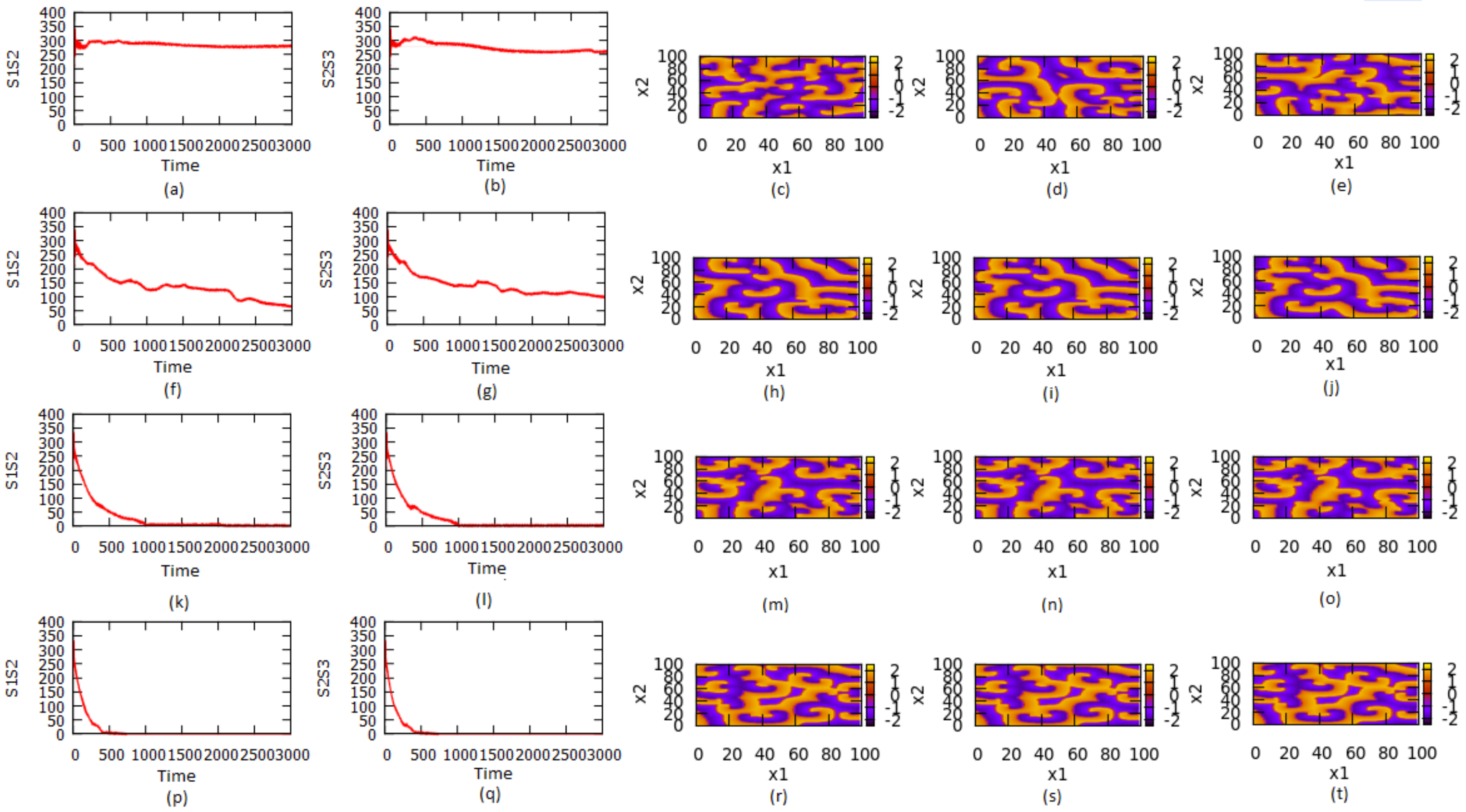}
\begin{quotation}
\caption{Synchronization of a fully linearly connected network of type \eqref{6a6} with $n=3$, $c(x)=0$.
For initial conditions, we choose random initial conditions as follows: for each $x\in \Omega$, we whose $u_i(x,0)$ and $v_i(x,0)$, $i\in \{1,2,3\}$ as  random uniform values in $[-1,1]$.  We observe that the synchronization occurs
for $g_3\geq 0.014.$. Indeed, each row, from up to down, correspond respectively to the following values of $g_3$: $0.001, 0.005, 0.01, 0.014$.
The two first columns represent the synchronization error respectively between $u_1$ and $u_2$ and $u_2$ and $u_3$.
The three others columns represent respectively from left to right the isovalues of $u_1$, $u_2$ and $u_3$  for all $x=(x_1,x_2)\in \Omega$ at time $3000$.
Asymptotically, the three subsystems evolve with the same patterns.}
\label{toibuon8}
\end{quotation}
\end{center}
\end{figure}

\begin{figure}[H]
\begin{center}
\includegraphics[scale=0.7]{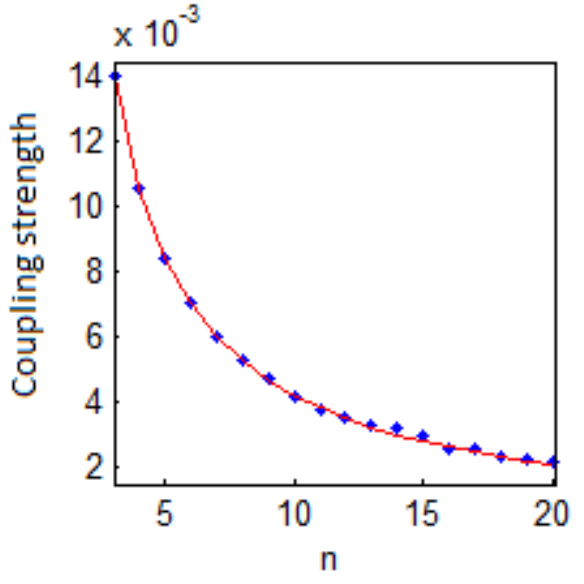}
\begin{quotation}
\caption{ Evolution of the coupling strength value $g_n$  needed for synchronization with the respect to the number of nodes, $n$ in the graph for a fully
connected network of type \eqref{6a6} with $c(x)=0$, when $n$ is varied from $3$ to $20$.
For initial conditions, we choose  random uniform values in $[-1,1]$. The blue points represent the values obtained numerically, the red curve represents the function $\displaystyle g_n=\displaystyle\frac{0.042}{n}$.
Thus, we obtain heuristically a $\frac{1}{n}$ law. Note that this heuristically law is independent of the spatial structure of the initial conditions.}
\label{FullyCN-Uniform-loi-1-n}
\end{quotation}
\end{center}
\end{figure}

\begin{figure}[H]
\begin{center}
\includegraphics[scale=0.9]{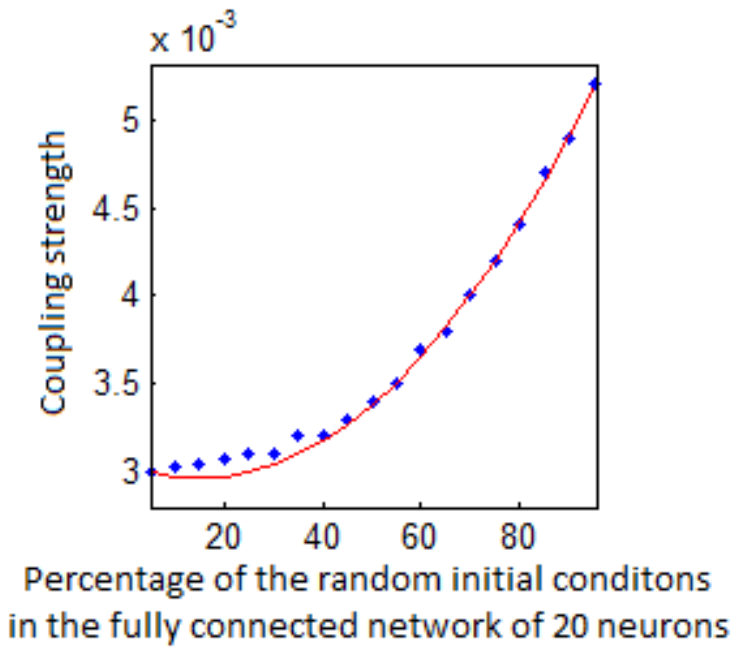}
\begin{quotation}
\caption{\label{ev-FC-PLU-RICunif} Evolution of the coupling strength value $g_{20}$   needed for synchronization with the respect  to the ratio between random uniform
initial conditions and space homogeneous initial conditions in the fully connected network. The blue points represent the values obtained numerically,
the red curve represents the function $\displaystyle g_n(p)=352\times10^{-9}p^2-11\times10^{-6}+0.00305$.}\end{quotation}
\end{center}
\end{figure}

\begin{figure}[H]
\begin{center}
\includegraphics[scale=0.5]{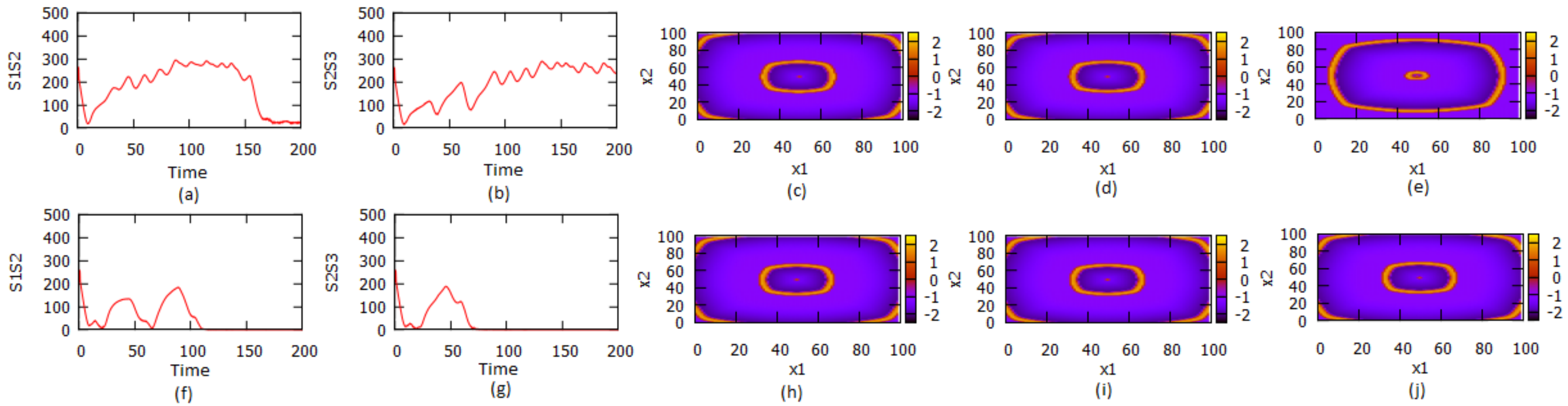}
\begin{quotation}
\caption{Synchronization of a fully linearly connected network of type \eqref{6a6} with $n=3$, and $c(x)=0$ if $x$ is in a small neighborhood of the center of $\Omega$, $c(x)=-1.1$ overwise.
For one subsystem, and because of the oscillatory-excitable of the FHN systeclearpagem,this induces generation o periodic pulses starting at $(0,0)$.
For initial conditions, we choose three distinct space-homogeneous values. We observe that the synchronization occurs
for $g_3\geq 0.013$. Indeed, each row, from up to down, correspond respectively to the following values for $g_3$: $0.01$ and $0.013$.
The two first columns represent the synchronization error respectively between $u_1$ and $u_2$ and $u_2$ and $u_3$.
The three others columns represent respectively from left to right the isovalues of $u_1$, $u_2$ and $u_3$  for all $x=(x_1,x_2)\in \Omega$ at time $3000$.
Asymptotically, the three subsystems evolve with spiral patterns.}
\label{figlong1new1}
\end{quotation}
\end{center}
\end{figure}

\subsection{Unidirectionally coupled ring network}

For the unidirectionally ring connected network, the system reads as:
\begin{equation}\label{eq:ringnetwork}
\left\{ \begin{gathered}
  \epsilon u_{it}=d_u\Delta u_i-u_i^3+3u_i-v_i-g_n\sum\limits_{j = 1,j\neq i}^n(u_i-u_{i+1})\hfill \\
    v_{it}=au_i-bv_i+c(x)\hfill \\
\end{gathered}  \right. \,\,\,\,\,\,\,\,\,\,\, i=\overline{1,n},
\end{equation}
where $g_n$ is the constant coupling strength between each couple of nodes corresponding to an edge in the graph.
As for fully connected networks We present here three figures.
\begin{itemize}
\item The figure \ref{fig:ringallspirals} represent the evolution of the network, for $n=3$, when we choose all initial conditions with spirals.
The simulations show that we obtain synchronization for a threshold value of  $g_3\simeq 0.001$.
They also show that we obtain asymptotically three spirals patterns.
\item The figure \ref{fig:ringuniform} is obtained in an analogous way as the figure \ref{fig:ringallspirals} but with different initial conditions.
Here, for each $x\in \Omega$, we choose $u_i(x,0)$ and $v_i(x,0)$, $i\in \{1,2,3\}$ as  random uniform values in $[-1,1]$.
The simulations show that we obtain synchronization for a threshold value of  $g_3\simeq 0.02$.
They also show that we obtain asymptotically three similar patterns.
\item In figure  \ref{fig:loi-ringuniform}, we represent the evolution of the coupling strength value $g_n$  needed for synchronization with respect to the number of nodes in the graph.
The number of nodes is varied from $3$ to $20$, and we obtain an heuristic law in $n^2$. This is in good accordance with our theoretical results and with previous results for ODE's networks.
We have also obtained the same heuristic laws when we choose only spirals for initial conditions or spirals and homogeneous or even with $c(x)$ non-constant function.
Thus the spatial effects induced by initial conditions do not affect this heuristic law already found on ODE's ring linearly coupled networks.

\end{itemize}

\begin{figure}[H]
\begin{center}
\includegraphics[scale=0.5]{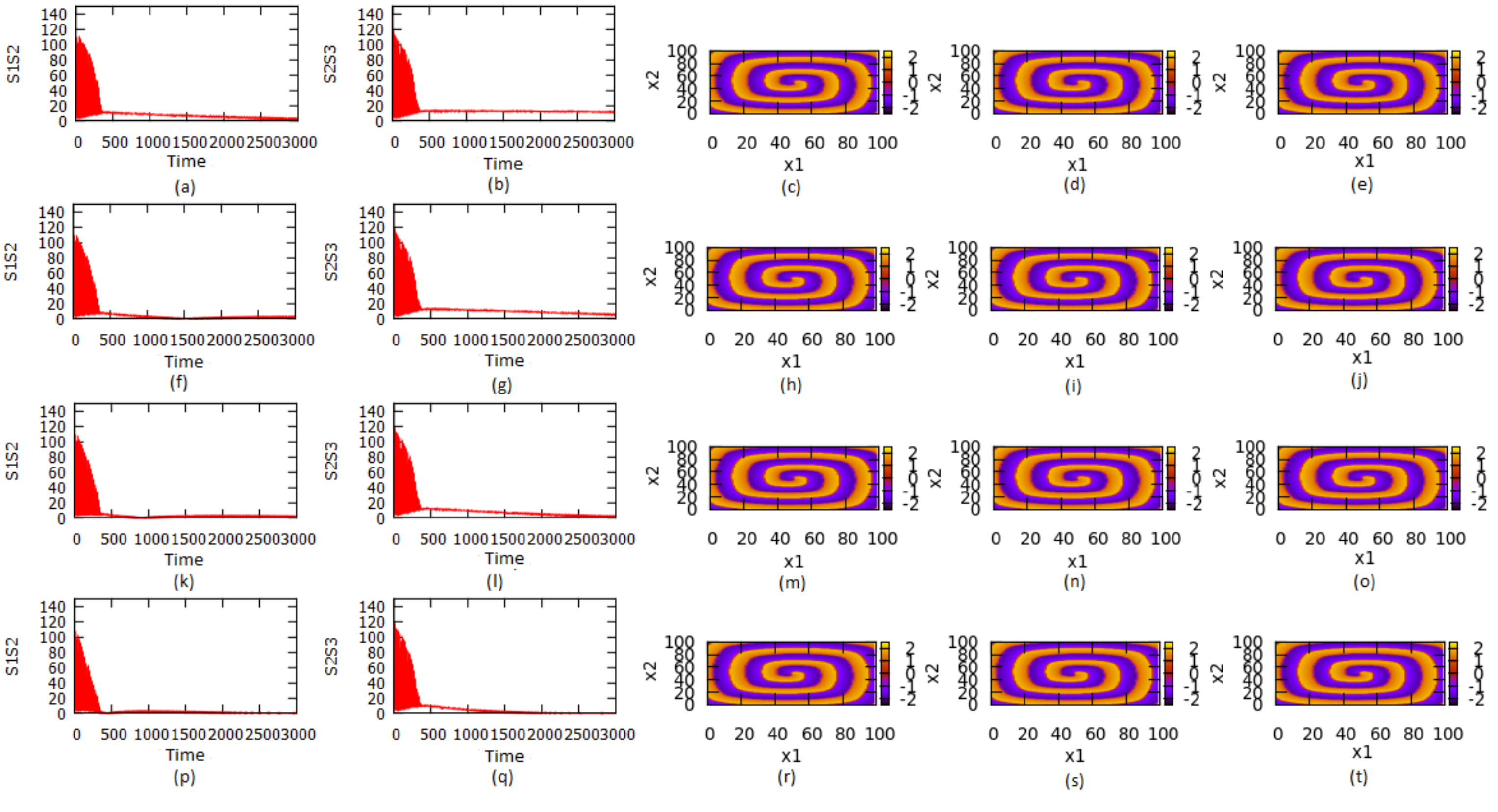}
\begin{quotation}
\caption{Synchronization of a fully linearly connected network of type \eqref{eq:ringnetwork} with $n=3$, $c(x)=0$.
For initial conditions, we choose three spirals. We observe that the synchronization occurs
for $g_3\geq 0.001$. Indeed, each row, from up to down, correspond respectively to the following values for $g_3$: $0.0003, 0.0005, 0.001$.
The two first columns represent the synchronization error respectively between $u_1$ and $u_2$ and $u_2$ and $u_3$.
The three others columns represent respectively from left to right the isovalues of $u_1$, $u_2$ and $u_3$  for all $x=(x_1,x_2)\in \Omega$ at time $3000$.
Asymptotically, the three subsystems evolve with spiral patterns.}
\label{fig:ringallspirals}
\end{quotation}
\end{center}
\end{figure}

\begin{figure}[H]
\begin{center}
\includegraphics[scale=0.5]{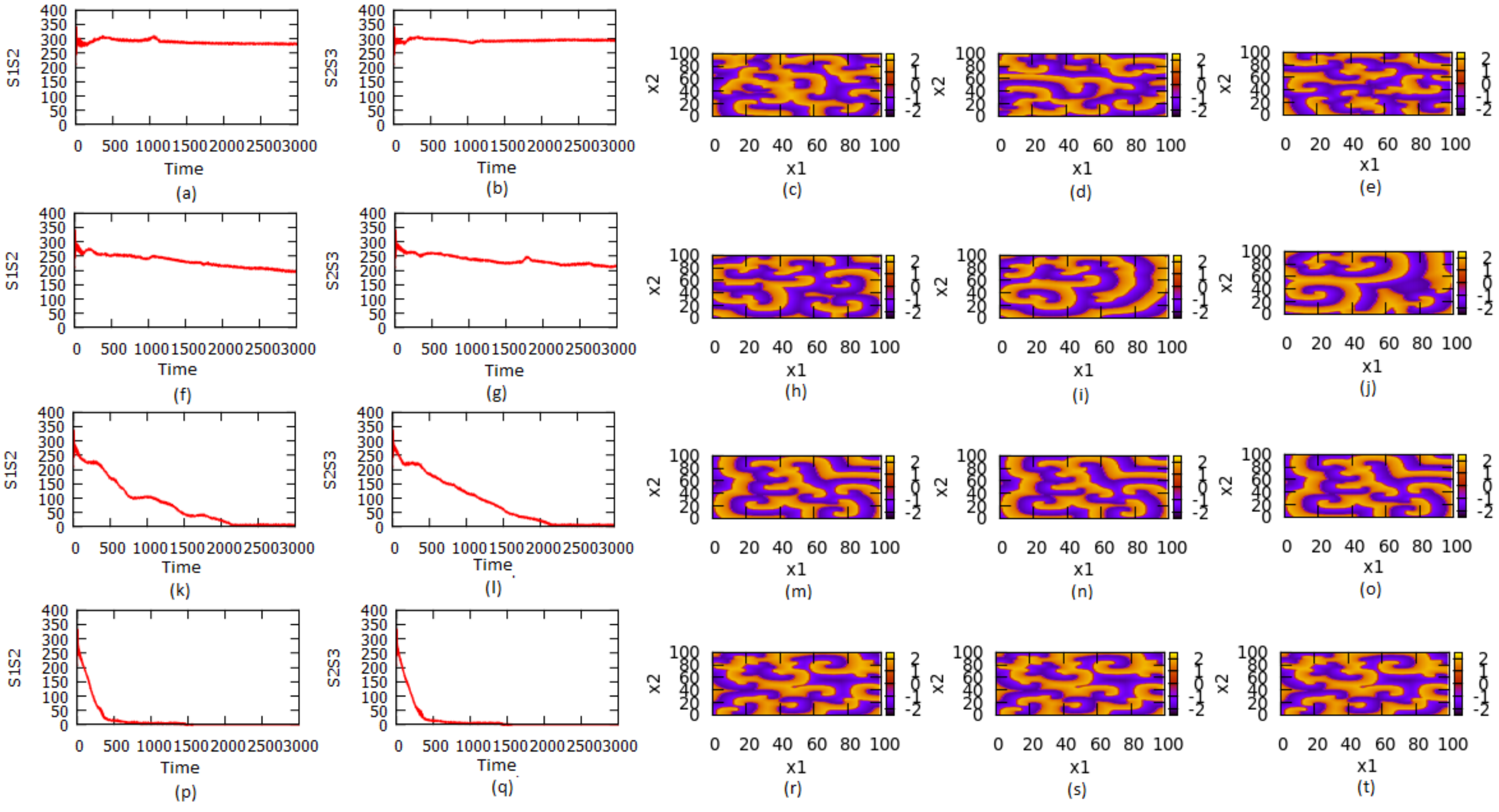}
\begin{quotation}
\caption{Synchronization of a fully linearly connected network of type \eqref{eq:ringnetwork} with $n=3$, $c(x)=0$.
For initial conditions, we choose random initial conditions as follows: for each $x\in \Omega$, we whose $u_i(x,0)$ and $v_i(x,0)$, $i\in \{1,2,3\}$ as  random uniform values in $[-1,1]$.
We observe that the synchronization occurs
for $g_3\geq 0.02$. Indeed, each row, from up to down, correspond respectively to the following values of $g_3$: $0.001, 0.005, 0.02$.
The two first columns represent the synchronization error respectively between $u_1$ and $u_2$ and $u_2$ and $u_3$.
The three others columns represent respectively from left to right the isovalues of $u_1$, $u_2$ and $u_3$  for all $x=(x_1,x_2)\in \Omega$ at time $3000$.
Asymptotically, the three subsystems evolve with the same patterns.}
\label{fig:ringuniform}
\end{quotation}
\end{center}
\end{figure}

\begin{figure}[H]
\begin{center}
\includegraphics[scale=0.7]{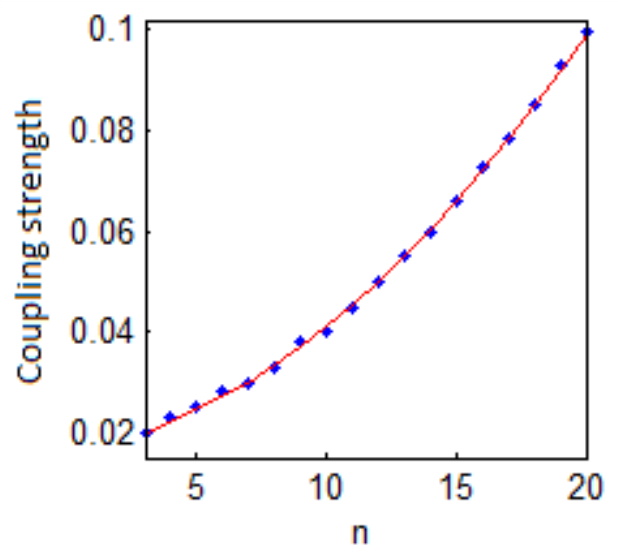}
\caption{Evolution of the coupling strength value $g_n$  needed for synchronization with the respect to the number of nodes, $n$ in the graph of the unidirectionally ring
connected network of type \eqref{eq:ringnetwork} with $c(x)=0$, when $n$ is varied from $3$ to $20$.
For initial conditions, we choose  approximately $50\%$ of spirals and $50\%$ of homogeneous. The blue points represent the values obtained numerically, the red curve represents
the function $\displaystyle  g_n=0.0000167n^2+0.00062n+0.02$. }
\label{fig:loi-ringuniform}
\end{center}
\end{figure}

\section{Conclusion} In this paper, we have considered a network of $n$ coupled  reaction-diffusion systems. We obtained three main contributions. First, we prove of the existence of the network attractor, and therefore, within this attractor, we have analyzed the synchronization behavior. We found out theoretically threshold values for synchronization for general class of networks with linear coupling. Finally, we have performed numerical simulations with different kind
of initial conditions and studied numerically the spatial effects on synchronization and pattern formation. Thanks for the numerical experiments, we exhibited heuristic laws  with regard to the number of nodes in the graph. The general form of these heuristic laws does not depend on  spatial  heterogeneity of asymptotic behavior. Our numerical simulations also show the persistence of asymptotic spatial patterns in complex networks. In future work, we aim to apply these results to more realistic neuronal networks.

\appendix
\textbf{Appendix}
\begin{theorem}
\label{th:GrowUni}
Let $g$,$h$ and $y\in L^1(\R)$ three positive functions. We suppose that for all $t \geq t_0$:
\begin{equation}
\label{ineq:gy}
\frac{dy}{dt}\leq gy+h
\end{equation}
and,
\begin{equation}
\int_t^{t+r}g(s)ds\leq a_1,\int_t^{t+r}h(s)ds\leq a_2,\int_t^{t+r}y(s)ds\leq a_3,
\end{equation}
where $r,a_1,a_2,a_3$ are positive constants. Then
\begin{equation}
y(t+r)\leq(a_3r+a_2)e^{a_1},\forall t \geq t_0.
\end{equation}
\end{theorem}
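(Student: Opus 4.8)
The plan is to reduce the statement to the classical one-variable Gronwall inequality applied on a \emph{family} of sub-intervals sharing the right endpoint $t+r$, and then to average the resulting pointwise bound over the left endpoint. First I would fix $t\ge t_0$ and, for each $s\in[t,t+r]$, integrate the differential inequality \eqref{ineq:gy} on $[s,t+r]$. Since $y$ is (locally absolutely) continuous, the scalar Gronwall lemma gives
\[
y(t+r)\le y(s)\exp\!\Big(\int_s^{t+r}g(\tau)\,d\tau\Big)+\int_s^{t+r}h(\tau)\exp\!\Big(\int_\tau^{t+r}g(\theta)\,d\theta\Big)d\tau .
\]
Here positivity enters: because $g\ge 0$, each interval of integration in the exponentials is contained in $[t,t+r]$, so $\int_s^{t+r}g\le a_1$ and $\int_\tau^{t+r}g\le a_1$; using also $h\ge 0$ together with $\int_t^{t+r}h\le a_2$ one obtains the clean pointwise estimate
\[
y(t+r)\le e^{a_1}\big(y(s)+a_2\big)\qquad\text{for every }s\in[t,t+r].
\]

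The second step is to integrate this last inequality in $s$ over $[t,t+r]$. The left-hand side does not depend on $s$, so it contributes $r\,y(t+r)$, while $\int_t^{t+r}y(s)\,ds\le a_3$ by hypothesis; hence $r\,y(t+r)\le e^{a_1}\big(a_3+r a_2\big)$, that is
\[
y(t+r)\le\Big(\tfrac{a_3}{r}+a_2\Big)e^{a_1}.
\]
Since $t\ge t_0$ was arbitrary, this holds for all such $t$. In the applications made in Section \ref{section:exofGA} (for \eqref{eq:1d} and in Lemma \ref{Lem-CompTraj}) one takes $r=1$, for which this bound is exactly the stated $(a_3r+a_2)e^{a_1}$.

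I do not expect a serious obstacle here; the only point that needs care is the bookkeeping with the direction of integration. Contrary to the usual Gronwall argument, the right endpoint $t+r$ is frozen and the left endpoint is the free variable, and it is precisely this setup that makes the averaging step legitimate and lets the hypothesis $\int_t^{t+r}y\le a_3$ be used. Positivity of $g$ (resp.\ $h$) is used only to throw away the surplus contributions $\int_t^{s}g$ and to keep every exponential bounded by $e^{a_1}$ (resp.\ to bound $\int_s^{t+r}h$ by $a_2$). A minor regularity caveat is that the scalar Gronwall step requires $y$ to be at least locally absolutely continuous, or the differential inequality to be read in its integrated form; this is automatic in all uses of the lemma, where $y$ is a norm of a strong solution.
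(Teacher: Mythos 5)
Your proof is correct and follows essentially the same route as the paper's: apply Gronwall (via the integrating factor $e^{-\int g}$) on the subinterval $[s,t+r]$ with the right endpoint frozen, discard the harmless exponential factors using $g\ge 0$, and then average over the left endpoint $s\in[t,t+r]$ to bring in the hypothesis $\int_t^{t+r}y\le a_3$. The constant you obtain, $\bigl(\frac{a_3}{r}+a_2\bigr)e^{a_1}$, is the standard one (it is the form in Temam, and it is the form used in the corollary that follows this theorem in the appendix); the bound $(a_3 r+a_2)e^{a_1}$ in the statement --- and the $(a_3+a_2 r)e^{a_1}$ at the end of the paper's own proof --- coincide with it only at $r=1$ and appear to be typos, so you were right to prove the standard estimate and flag the discrepancy rather than chase the literal constant.
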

\begin{proof}
Let $s_0\geq t_0$. By \eqref{ineq:gy}, we have:
\begin{equation}
\frac{d}{dt}\big{(}e^{-\int_{s_0}^tg(s)ds}y(t)\big{)}\leq e^{-\int_{s_0}^tg(s)ds}h.
\label{eq:demlemGrU}
\end{equation}
We integrate \eqref{eq:demlemGrU} between $t$ and $s_0+r$ for $t \in [s_0,s_0+r]$. We obtain:
\begin{equation}
e^{-\int_{s_0}^{s_0+r}g(s)ds}y(s_0+r)-e^{-\int_{s_0}^{t}g(s)ds}y(t) \leq \int_t^{s_0+r} \exp\{-\int_{s_0}^{t'} g(s)ds\}h(t')dt'.
\end{equation}
It follows that:
\begin{equation}
e^{-a_1}y(s_0+r)-e^{-\int_{s_0}^{t}g(s)ds}y(t)\leq\int_t^{s_0+r} h(t')dt'.
\end{equation}
By multiplying by $\exp\{\int_{s_0}^{t}g(s)ds\}$, we find:
\begin{equation}
e^{\int_{s_0}^{t}g(s)}e^{-a_1}y(s_0+r)-y(t)\leq e^{\int_{s_0}^{t}g(s)ds}\int_t^{s_0+r} h(t')dt'.
\end{equation}
Then we integrate between $s_0$ and $s_0+r$ with respect to $t$. This gives:
\begin{equation}
e^{-a_1}y(s_0+r)\leq a_3 +a_2r,
\end{equation}
thus:
\begin{equation}
y(s_0+r)\leq (a_3 +a_2r)e^{a_1}.
\end{equation}
\end{proof}

\begin{corollary}
Let  $y, h\in L^1(\R)$ two positive functions. We assume that fort $t \geq t_0$:
\begin{equation}
\label{ineq:yprime}
\frac{dy}{dt}\leq h
\end{equation}
and,
\begin{equation}
\int_t^{t+r}h(s)ds \leq a_2,\ \int_t^{t+r}ds \leq a_3,
\end{equation}
where $r,a_2,a_3$ are positive constants. Then
\begin{equation}
y(t+r)\leq \frac{a_3}{r}+a_2,\forall t \geq t_0.
\end{equation}
\end{corollary}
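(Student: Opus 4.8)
The plan is to recognize this corollary as the degenerate case $g\equiv 0$ (hence $a_1=0$) of Theorem~\ref{th:GrowUni}, in which the exponential weights appearing there collapse to $1$; I would nevertheless spell out the short direct argument, since it isolates the two elementary ingredients and makes the constant transparent.

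First I would fix an arbitrary $s_0\geq t_0$ and integrate the differential inequality \eqref{ineq:yprime} on $[t,s_0+r]$ for an arbitrary $t\in[s_0,s_0+r]$, obtaining
\begin{equation*}
y(s_0+r)-y(t)\leq\int_t^{s_0+r}h(\tau)\,d\tau\leq\int_{s_0}^{s_0+r}h(\tau)\,d\tau\leq a_2,
\end{equation*}
where the middle inequality uses $h\geq 0$ (so enlarging the interval of integration only increases the integral) and the last is the window bound on $h$. This gives $y(s_0+r)\leq y(t)+a_2$ for every $t\in[s_0,s_0+r]$.

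The second step is an averaging of this estimate over the window: integrating in $t$ over $[s_0,s_0+r]$ yields
\begin{equation*}
r\,y(s_0+r)\leq\int_{s_0}^{s_0+r}y(t)\,dt+r\,a_2\leq a_3+r\,a_2,
\end{equation*}
using the $L^1$-bound of $y$ on the window $[s_0,s_0+r]$ of length $r$, and dividing by $r>0$ gives $y(s_0+r)\leq\frac{a_3}{r}+a_2$. Since $s_0\geq t_0$ is arbitrary, this is the claim. I do not anticipate any genuine obstacle: the only points requiring care are the sign conventions — positivity of $h$ to enlarge the integration interval, and positivity of $y$ so that the averaging over $t$ does not reverse the inequality — and the observation that the window on which $y$ is controlled has the same length $r$ as the window controlling $h$, which is exactly what produces the factor $a_3/r$ rather than $a_3 r$. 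Equivalently, the statement follows at once by specializing Theorem~\ref{th:GrowUni} to $g\equiv 0$ and $a_1=0$.
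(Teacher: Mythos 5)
Your proof is correct and follows essentially the same route as the paper: the paper likewise notes the reduction to Theorem~\ref{th:GrowUni} with $a_1=0$ and then gives the same direct argument (integrate \eqref{ineq:yprime} from $t$ to $s_0+r$, then average in $t$ over $[s_0,s_0+r]$). Your version merely spells out the intermediate bounds that the paper leaves implicit.
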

\begin{proof}
It follows obviously from theorem \ref{th:GrowUni} with $a_1=0$. We can also give a direct proof:
\begin{equation}
y(s_0+r)-y(t)\leq \int_t{s_0+r}hds.
\end{equation}
Then we integrate between $s_0$ et $s_0+r$ with respect to $t$. We obtain,
\begin{equation}
y(s_0+r)\leq \frac{a_3}{r}+a_2.
\end{equation}

\end{proof}

\end{document}